\documentclass[12pt,a4paper]{amsart}
\usepackage{amsmath, amsthm, amscd, amsfonts,graphicx}

\usepackage{color}
\usepackage{enumerate}
\numberwithin{equation}{section}
\theoremstyle{plain}
 \newtheorem{theorem}{Theorem}[section]
 \newtheorem{lemma}[theorem]{Lemma}
 \newtheorem{proposition}[theorem]{Proposition}
 
 \newtheorem{corollary}[theorem]{Corollary}
\theoremstyle{definition}

\newenvironment{enumeratei}{\begin{enumerate}[\quad\upshape (i)]} {\end{enumerate}}

\newcommand \dual [1] {{#1^{\textup{dual}}}}
\newcommand \tdual [1] {{#1{}\kern-1.3pt^{\textup{dual}}}}
\newcommand \pow [1] {\textup{Pow}(#1)}
\newcommand \printv {\textup{PrIntv}}
\newcommand \Lat [1] {\mathcal L_{\textup{lat}}(#1)}
\newcommand \Cgeo [1] {\mathcal G_{\textup{conv}}(#1)}
\newcommand \hsz {{\triangle_{A_0,A_1,A_2}}}
\newcommand \khsz[1] {{\triangle_{A_0,A_1,A_2}^{(#1)}}}
\newcommand \ngbhood [1] {\hal N_{\kern-2pt #1}}
\newcommand \Con {\textup{Con}}
\newcommand \con {\textup{con}}
\newcommand \targref [1] {\textup{(T\ref{#1})}}
\newcommand \lmskip {\hskip 6pt} 
\newcommand \quot [1] {``#1''}
\newcommand \hal[1]{{\mathcal #1\kern 0.1pt}}
\renewcommand \phi {\varphi}
\renewcommand \rho {\varrho}
\newcommand \thresh [1] {m^{\kern-0.5pt(#1)}}
\newcommand \abund [2] {\textup{Abd}(#1 , #2)}
\newcommand \enlarge [2] {\textup{OpExt}(#1, #2)}
\newcommand \pbxskip {\hskip 2 pt}
\newcommand \nplu {\mathbb N^+}
\newcommand \dir[1]{\textup{dir}(#1)}
\newcommand \Cunit {\hal C_{\textup{unit}}}

\newcommand \inter[1] {\textup{Int}(#1)}

\renewcommand \epsilon {{\boldsymbol\varepsilon}}
\newcommand \trap {\textup{Trp}}
\newcommand \goodset {H}

\newcommand \dist[2]{\textup{dist}(#1,#2)}

\newcommand \eeqref[1] { \overset{\textup{\eqref{#1}}}=  }
\newcommand \ssty[1]{\scriptscriptstyle{#1}}

\newcommand \hthet[2] {\boldsymbol\chi_{#1,#2}}
\newcommand \loosin {\overset{\ssty{\textup{loose}}}\subset}
\newcommand \comet [2]{{\textup{Comet}(#1,#2)}}
\newcommand \rhullo [1] {\textup{Conv}_{\real^{#1}}} 
\newcommand\set [1]{\{#1\}}
\newcommand \tuple [1] {\langle #1 \rangle}
\newcommand \pvpair [2] {\tuple{#1;#2}}
\newcommand \pair [2] {\tuple{#1,#2}}
\newcommand \real {\mathbb R}
\newcommand \preal {\real^{2}}
\newcommand \nothing [1] {}

\newcommand \red [1] {{\color{red}#1\color{black}}}

\newcommand \tbf[1] {\textbf{#1}}  

%
%
%
\begin{document}
\title[A combinatorial property of compact sets]
{A convex combinatorial property of compact sets in the plane and its roots in lattice theory}

\author[G.\ Cz\'edli]{G\'abor Cz\'edli}
\address{G\'abor Cz\'edli, University of Szeged, Bolyai Institute, Szeged,
Aradi v\'ertan\'uk tere 1, Hungary 6720}
\email{czedli@math.u-szeged.hu}
\urladdr{http://www.math.u-szeged.hu/\textasciitilde{}czedli/}

\author[\'A.\ Kurusa]{\'Arp\'ad Kurusa}
\address{\'Arp\'ad Kurusa, University of Szeged, Bolyai Institute\\Szeged, Aradi v\'ertan\'uk tere 1, Hungary 6720}
\email{kurusa@math.u-szeged.hu}
\urladdr{http://www.math.u-szeged.hu/\textasciitilde{}kurusa/}


\subjclass[2010]{Primary 52C99, secondary 52A01, 06C10}

\keywords{
Congruence lattice, 
planar semimodular lattice,
convex hull, 
compact set, 
circle, 
combinatorial geometry, 
abstract convex geometry, 
anti-exchange property}

\begin{abstract} 
K.\ Adaricheva and M.\ Bolat have recently proved that if $\,\hal U_0$ and $\,\hal U_1$ are circles in a triangle with vertices $A_0,A_1,A_2$, then there exist $j\in \set{0,1,2}$ and $k\in\set{0,1}$ such that $\,\hal U_{1-k}$ is included in the convex hull of $\,\hal U_k\cup(\set{A_0,A_1, A_2}\setminus\set{A_j})$. 
One could say disks instead of circles.
Here we prove the existence of such a $j$ and $k$ for the more general case where $\,\hal U_0$ and $\,\hal U_1$ are compact  sets in the plane such that $\,\hal U_1$ is obtained from $\,\hal U_0$ by a positive homothety or by a translation. 
Also, we give a short survey to show how lattice theoretical antecedents, including a series of papers on planar semimodular lattices by G.\ Gr\"atzer and E.\ Knapp, lead to our result.
\end{abstract}

\dedicatory{Dedicated to George A.~Gr\"atzer on the occasion of the fifty-fifth anniversary of the Gr\"atzer--Schmidt Theorem and the fortieth anniversary of his monograph ``General Lattice Theory"}

\date{\red{\hfill CET 03:20, \qquad  July 10, 2018}}

\maketitle

\section{Aim and outline}

\subsection*{Our goal}
Apart from a survey of historical nature, which the reader can skip over if he is interested only in the main result, this paper belongs to elementary combinatorial \emph{geometry}.  The motivation and the excuse that this paper is submitted to CGASA are the following.
The first author has recently written a short biographical paper \cite{czgGGa} to celebrate professor George A.\ Gr\"atzer, and also an interview \cite{czgGGb} with him; at the time of this writing, both have already appeared online in CGASA. Although \cite{czgGGa} mentions that G.\ Gr\"atzer's purely lattice theoretical results have lead to results in geometry, no  detail on the transition from lattice theory to geometry is given there. This paper, besides presenting a recent result in geometry, exemplifies how such a purely lattice theoretical target as studying congruence lattices of finite lattices can lead, surprisingly, to some progress in geometry.

The real plane and the usual convex hull operator on it will be denoted by $\preal$ and $\rhullo 2$. 
In order to formulate our result, we need the following two kinds of planar transformations, that is, $\preal\to\preal$ maps. 
Given $P\in\preal$ and $0<\lambda\in\real$,  the \emph{positive homothety} with (homothetic) center $P$ and ratio $\lambda$ is defined by  
\begin{equation}
\text{
$\hthet P\lambda \colon \preal\to \preal$ by  $X \mapsto (1-\lambda)P+\lambda X = P+\lambda(X-P)$.
}
\label{eqtxthmThdF}
\end{equation}
The more general concept of homotheties where $\lambda$ can also be negative is not needed in the present paper. 
For a given $P\in \preal$, the map $\preal\to\preal$, defined by $X\mapsto P+X$, is a \emph{translation}. 
Our main goal is to prove the following theorem.

\begin{theorem}\label{thmmain}
Let $A_0,A_1,A_2\in\preal$ be points of the plane $\preal$. 
Also, let $\,\hal U_0\subset \preal$ and $\,\hal U_1\subset \preal$ be compact sets such that at least one of the following three conditions holds:
\begin{enumerate}[\upshape{\indent}(a)]
\item\label{thmmaina} $\,\hal U_1$ is a positive homothetic image of $\,\hal U_0$, that is, $\,\hal U_1=\hthet P\lambda(\hal U_0)$ for some $P\in\preal$ and $0<\lambda\in\real$;
\item\label{thmmainb} $\,\hal U_1$ is obtained from $\,\hal U_0$ by a translation;
\item\label{thmmainc} at least one of $\,\hal U_0$ and $\,\hal U_1$ is a singleton.
\end{enumerate}
With these assumptions, 
\begin{equation}
\left\{\pbxskip
\parbox{9.5cm}
{if 
$\,\,\hal U_0\cup \,\hal U_1\subseteq \rhullo 2(\set{A_0,A_1,A_2})$,  
then there exist subscripts $j\in \set{0,1,2}$ and $k\in\set{0,1}$ such that\\ 
\leftline{\hfil\hfil$
\hal U_{1-k} \subseteq \rhullo 2\bigl( \hal U_k\cup (\set{A_0,A_1, 
A_2}\setminus\set{A_j})\bigr).
$\hfil}
}
\right.
\label{eqpbxThmlnygrSmzWrB}
\end{equation}
\end{theorem}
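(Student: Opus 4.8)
The plan is to reduce first to convex bodies and then to argue through support functions. \emph{First,} I would replace $\hal U_0,\hal U_1$ by their convex hulls $K_0:=\rhullo2(\hal U_0)$ and $K_1:=\rhullo2(\hal U_1)$. Since $T:=\rhullo2(\set{A_0,A_1,A_2})$ is convex, the hypothesis $\hal U_0\cup\hal U_1\subseteq T$ gives $K_0\cup K_1\subseteq T$; a positive homothety $\hthet P\lambda$ and a translation each commute with $\rhullo2$, so the relation between the two sets is inherited by $K_0,K_1$; and because the right-hand side of \eqref{eqpbxThmlnygrSmzWrB} is convex and contains $\hal U_k$ if and only if it contains $K_k$, while $\hal U_{1-k}\subseteq K_{1-k}$, it suffices to prove the conclusion for $K_0,K_1$. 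Writing $e_j:=\rhullo2(\set{A_0,A_1,A_2}\setminus\set{A_j})$ for the closed edge opposite $A_j$ and $R_j^{(k)}:=\rhullo2(K_k\cup e_j)$, the goal becomes: there are $j$ and $k$ with $K_{1-k}\subseteq R_j^{(k)}$. Using that \eqref{eqpbxThmlnygrSmzWrB} is symmetric under swapping the roles of $\hal U_0$ and $\hal U_1$ (the quantifier over $k$), in case \ref{thmmaina} I may assume $0<\lambda\le1$, and I then aim for $k=0$.

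Next I would record a covering lemma: for any nonempty $K\subseteq T$, picking $y\in K$, the three subtriangles $\rhullo2(\set y\cup e_j)$ tile $T$, so $\rhullo2(K\cup e_0)\cup\rhullo2(K\cup e_1)\cup\rhullo2(K\cup e_2)=T$. This already settles case \ref{thmmainc}: choosing $k$ so that the singleton is the \emph{contained} set, its single point lies in $T$, hence in some $R_j^{(k)}$. The same lemma shows that in the remaining cases \emph{each point} of $K_{1-k}$ lies in \emph{some} region; the whole difficulty is to fit the connected set $K_{1-k}$ into \emph{one} region. This is exactly where the homothety/translation hypothesis must enter, since a convex set covered by three convex sets need not lie in one of them.

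Now let $h_i(u)=\sup_{x\in K_i}\langle x,u\rangle$ be the support function of $K_i$. Since $R_j:=R_j^{(0)}$ has support function $\max\bigl(h_0(u),h_{e_j}(u)\bigr)$, the inclusion $K_1\subseteq R_j$ means
\[
h_1(u)\le\max\bigl(h_0(u),\,h_{e_j}(u)\bigr)\qquad\text{for every unit vector }u.
\]
Let $B:=\set{u:h_1(u)>h_0(u)}$ be the set of \emph{bad} directions. For $u\notin B$ the inequality is automatic; for $u\in B$ it reads $h_1(u)\le h_{e_j}(u)$, and since $K_1\subseteq T$ forces $h_1(u)\le\max_i\langle A_i,u\rangle$, this can fail only when $u$ lies in the open normal cone $N_j$ of $T$ at $A_j$ (the directions where $A_j$ is the strict maximizer), where $h_{e_j}(u)$ equals the \emph{second} largest of the $\langle A_i,u\rangle$. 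The crucial structural fact is that $B$ lies in an \emph{open half-circle}: for a translation $h_1-h_0=\langle v,\cdot\rangle$, so $B=\set{u:\langle v,u\rangle>0}$ outright; for a homothety with $\lambda\le1$ one has $h_1-h_0=(1-\lambda)\bigl(\langle P,\cdot\rangle-h_0\bigr)$, and since $u\mapsto\langle P,u\rangle-h_0(u)$ is concave and positively homogeneous, its positivity set is a convex cone, which is proper because $h_0(u)+h_0(-u)\ge0$ forbids antipodal pairs.

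Finally comes the choice of $j$, which I expect to be the main obstacle. Dropping $A_j$ fails only if $B$ meets $N_j$ in a direction where $K_1$ overshoots the second vertex, so I would assume for contradiction that all three choices fail, obtaining witnesses $u_0,u_1,u_2$ with $u_j\in B\cap N_j$ and $h_1(u_j)>h_{e_j}(u_j)$. The three open normal cones tile the circle of directions, and dually the three outward edge-normals positively span $\preal$, so no open half-circle contains all of them; yet all three witnesses lie in the single half-circle containing $B$. The intended route is to order the witnesses by angle, write the middle one as a positive combination of the outer two, and feed this into the subadditivity of $h_1$ together with the three strict inequalities $h_1(u_j)>(\text{second largest }\langle A_i,u_j\rangle)$ and the global bound $h_1\le h_T$, to force a contradiction. \textbf{The hard part} will be closing this last combinatorial–analytic step: one must track, within each $N_j$, which of the two remaining vertices realizes the second maximum (the switch occurs along the inward edge-normal $-n_j$) and show that the semicircle constraint on the witnesses is incompatible with $K_1$ simultaneously overshooting the second vertex in all three cones; it may help to first reduce, by a compactness argument, to the tight configurations where $K_1$ touches $\partial T$ or is tangent to $K_0$. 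The homothety case runs along the same lines, the only change being the description of $B$ recorded above.
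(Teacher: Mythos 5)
Your reduction to convex hulls, your covering lemma, and the disposal of case (c) are all fine, and the support-function reformulation $K_1\subseteq\rhullo 2(K_0\cup e_j)\Leftrightarrow h_1\le\max(h_0,h_{e_j})$, together with the observation that the set $B$ of bad directions lies in an open half-circle, is correct. But the proof is not complete, and there are two genuine problems. First, the normalization ``assume $0<\lambda\le 1$ and aim for $k=0$'' is already wrong: $k$ cannot be fixed in advance. Take $T$ equilateral with $A_0$ on top, let $K_0$ be a disk of radius $r$ tangent to the bottom edge $e_0$, centered on the vertical axis at distance $D$ from $A_0$, and let $K_1=\hthet{P}{\lambda}(K_0)$ be a small disk centered on the axis at distance $d<\lambda D$ from $A_0$. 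Then $K_1$ protrudes on \emph{both} sides of the tangent cone from $A_0$ to $K_0$; since near $A_0$ the set $\rhullo 2(K_0\cup e_j)$ (for $e_j$ an edge through $A_0$) only reaches as far as the far tangent line from $A_0$ to $K_0$, and $\rhullo 2(K_0\cup e_0)$ does not reach $A_0$ at all, $K_1$ is contained in none of the three sets $\rhullo 2(K_0\cup e_j)$, and the conclusion can only hold with $k=1$. So your contradiction hypothesis must be the failure of all six pairs $\pair jk$, which doubles the witnesses (three for $B=\set{u:h_1(u)>h_0(u)}$ and three for the analogous set with the roles of $h_0$ and $h_1$ reversed) and changes the combinatorics you would have to control.

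Second, and more seriously, the step you yourself flag as ``the hard part'' is exactly where all the difficulty of the theorem lives, and the route you sketch does not close it. Writing the middle witness as $u_{j_2}=au_{j_1}+bu_{j_3}$ with $a,b>0$ and using subadditivity gives $h_1(u_{j_2})\le ah_1(u_{j_1})+bh_1(u_{j_3})$; bounding the right-hand side above by $a\langle A_{j_1},u_{j_1}\rangle+b\langle A_{j_3},u_{j_3}\rangle$ and the left-hand side below by $\langle A_{j_1},u_{j_2}\rangle$ merely reproduces $\langle A_{j_1},u_{j_3}\rangle<\langle A_{j_3},u_{j_3}\rangle$, i.e.\ the defining property of $N_{j_3}$ --- no contradiction. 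Moreover, the only quantitative consequence of the homothety/translation hypothesis that your argument retains is that $B$ lies in an open half-circle, and you have neither shown nor made plausible that this alone implies the conclusion; since the statement fails for arbitrary pairs of convex bodies, the hypothesis must be exploited more strongly than this. For comparison, the paper proceeds along an entirely different route: it reduces to \emph{edge-free} convex bodies by approximating with finite intersections of rational disks, and then runs a deformation argument, shrinking both bodies by $\hthet{P_i}{\xi}$ toward corresponding interior points, taking the maximal $\xi$ for which the conclusion holds, and analyzing the critical contact configuration via internal-tangency lemmas and ``comets''. Your support-function framework is an attractive alternative (it would handle boundary edges for free), but as written the decisive step is missing and the announced strategy for it does not work.
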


Since at least one of the conditions \ref{thmmain}\eqref{thmmaina}, \ref{thmmain}\eqref{thmmainb}, and \ref{thmmain}\eqref{thmmainc} holds for any two circles, the following result of Adaricheva and Bolat becomes an immediate consequence of  Theorem~\ref{thmmain}.

\begin{corollary}[{Adaricheva and Bolat~\cite[Theorem 3.1]{kabolat}}]\label{corolmain}
Let $\,\hal U_0$ and $\,\hal U_1$ be \emph{circles} in the plane $\preal$. Then  \eqref{eqpbxThmlnygrSmzWrB} holds for all $A_0,A_1,A_2\in\preal$.
\end{corollary}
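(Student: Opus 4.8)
The plan is to obtain Corollary~\ref{corolmain} as a direct specialization of Theorem~\ref{thmmain}: given two circles, I would verify that at least one of the hypotheses \ref{thmmain}\eqref{thmmaina}--\ref{thmmain}\eqref{thmmainc} is satisfied, and then simply invoke the theorem's conclusion. Concretely, I would fix circles $\hal U_0,\hal U_1\subset\preal$ with centers $C_0,C_1$ and radii $r_0,r_1>0$. Both are closed and bounded, hence compact, so they are legitimate inputs to the theorem; and since neither is a singleton, only \eqref{thmmaina} and \eqref{thmmainb} can ever come into play. The argument then splits according to whether or not the two radii coincide.

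If $r_0=r_1$, then the translation $X\mapsto (C_1-C_0)+X$ carries $\hal U_0$ onto $\hal U_1$, so hypothesis~\ref{thmmain}\eqref{thmmainb} holds. If $r_0\neq r_1$, I would set $\lambda=r_1/r_0$, noting that $\lambda>0$ and $\lambda\neq1$, and take the homothetic center
\begin{equation*}
P=\frac{C_1-\lambda C_0}{1-\lambda},
\end{equation*}
which is well defined precisely because $\lambda\neq1$. A one-line computation gives $\hthet P\lambda(C_0)=(1-\lambda)P+\lambda C_0=C_1$, and since $\hthet P\lambda$ scales every distance by the factor $\lambda$, it sends the circle of radius $r_0$ about $C_0$ onto the circle of radius $\lambda r_0=r_1$ about $C_1$. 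Thus $\hthet P\lambda(\hal U_0)=\hal U_1$, so hypothesis~\eqref{thmmaina} holds. The very same computation works verbatim for disks, which also justifies the abstract's remark that one may say ``disks'' instead of ``circles''.

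In either case the premises of Theorem~\ref{thmmain} are in force, so its conclusion \eqref{eqpbxThmlnygrSmzWrB} holds for all $A_0,A_1,A_2\in\preal$; this is exactly the assertion of the corollary. I expect no genuine obstacle in this step: the reduction is purely elementary, and all the real difficulty has already been discharged inside Theorem~\ref{thmmain}, which I am permitted to assume. The only point deserving a moment's attention is to keep the two cases separated by the dividing line $r_0=r_1$, so that the closed form for $P$ is never evaluated at the forbidden value $\lambda=1$.
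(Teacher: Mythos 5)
Your proposal is correct and takes essentially the same route as the paper, which obtains Corollary~\ref{corolmain} as an immediate consequence of Theorem~\ref{thmmain} by noting that any two circles satisfy at least one of conditions \ref{thmmain}\eqref{thmmaina}--\ref{thmmain}\eqref{thmmainc}. Your case split (a translation when $r_0=r_1$, and the homothety $\hthet P\lambda$ with $\lambda=r_1/r_0$ and $P=(C_1-\lambda C_0)/(1-\lambda)$ when $r_0\neq r_1$) simply writes out the elementary verification that the paper leaves implicit.
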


Four comments are appropriate here. First,
according to another terminology, 
 positive homotheties (in our sense) and translations generate the group of positive  \emph{homothety-translations}, and one might think of using homothety-translations in Theorem~\ref{thmmain}. It will be pointed out in Lemma~\ref{lemmafrcSPpRtT} that we would not obtain a new result in this way, because a homothety-translation is always a homothety or a translation.
Since the rest of the paper focuses mainly on homotheties in our sense, we use disjunction rather than the 
hyphened form ``homothety-translation''.
Second, it is easy to see that \eqref{eqpbxThmlnygrSmzWrB} does not hold for two \emph{arbitrary} compact  sets, so the disjunction of  \eqref{thmmaina}, \eqref{thmmainb}, and \eqref{thmmainc} cannot be omitted from Theorem~\ref{thmmain}; see also Cz\'edli~\cite{czgCharc} for related information.
Third, Example 4.1 of Cz\'edli~\cite{czganeasy} rules out the possibility of generalizing Theorem~\ref{thmmain} for higher dimensions. Fourth, one may ask whether stipulating the \emph{compactness} of  $\,\hal U_0$ and $\,\hal U_1$ is essential in Theorem~\ref{thmmain}. Clearly, ``compact'' could be replaced by ``(topologically) closed'', since a non-compact closed set cannot be a subset of the triangle $\rhullo 2(\set{A_0,A_1,A_2})$,
but this trivial rewording would not be a valuable improvement. 
The situation
\begin{align*}
A_0&:=\pair 6 0,\,\,\, A_1:=\pair{-3}{3\sqrt3},\,\,\, A_2:=\pair{-3}{-3\sqrt3},\cr
\hal U_0&:=\set{\pair x y: x^2+y^2<1} \cup \set{\pair x y: x^2+y^2=1\text{ and $x$ is rational}}, \cr
\hal U_1&:=\set{\pair x y: x^2+y^2<1} \cup \set{\pair x y: x^2+y^2=1\text{ and $x$ is irrational}}, 
\end{align*}
exemplifies that Theorem~\ref{thmmain} would fail without requiring the compactness of $\,\hal U_0$ and $\,\hal U_1$.

\subsection*{Prerequisites and outline} 
No special prerequisites are required; practically,  every mathematician with usual M.Sc.\ background can understand the proof of Theorem~\ref{thmmain}. On the other hand, Section~\ref{sect2} is of historical nature and can be interesting mainly for specialists. 

The rest of the paper is structured as follows. 
In Section~\ref{sect2}, starting from lattice theoretical results including Gr\"atzer and Knapp~\cite{gknapp1,gknapp2,gknapp25,gknapp3,gknapp4}, we survey how lattice theoretical results lead to the present paper. Also, we say a few words on some similar results that belong to combinatorial geometry. Section~\ref{sect3} proves  Theorem~\ref{thmmain}  only for the particular case where the convex closures of the compact sets $\,\hal U_0$ and $\,\hal U_1$ are ``edge-free'' (to be defined later). Section~\ref{sectiongettingrid} reduces the general case to the edge-free case and completes the proof of  Theorem~\ref{thmmain}.

\section{From George Gr\"atzer's lattice theoretical papers to  geometry}\label{sect2}
Congruence lattices of finite lattices are well known to form 
George Gr\"atzer's favorite research topic, in which he has proved many nice and deep results; see, for example, the last section in the  biographical paper \cite{czgGGa} by the first author. 
At first sight, it is not so easy to imagine interesting links between this topic and geometry. The aim of this section is to present such a link by explaining how some of Gr\"atzer's \emph{purely lattice theoretical} results have lead to the present paper and other papers in \emph{geometry}. 
Instead of over-packing this section with too many definitions and statements, we are going to focus on \emph{links} connecting results and publications.  This explains that, in this section, some definitions are given only after discussing the links related to them. 
Although a part of this exposition is based on the experience of the first author, the link between lattice theory and the geometrical topic of the present paper is hopefully more than just a personal feeling.

\subsection{Planar semimodular lattices and their congruences}\label{subsect2a}
On November 28, 2006, Gr\"atzer and his student, Edward Knapp submitted their first paper, \cite{gknapp1}, to Acta Sci.\ Math.\ (Szeged) on planar semimodular lattices. 
A lattice $\hal L=\tuple{L;\vee,\wedge}$ is \emph{semimodular} if, for every $a\in L$, the map $L\to L$, defined by $x\mapsto a\vee x$, preserves the ``covers or equal'' relation $\preceq$; as usual,
$a\preceq b$ stands for $|\set{x\in L: a\leq x\leq b}|\in\set{1,2}$.
A lattice is \emph{planar} if it is finite and has a Hasse diagram that is also a planar graph.  Their first paper, \cite{gknapp1}, were soon followed by Gr\"atzer and  Knapp  \cite{gknapp2,gknapp25,gknapp3,gknapp4}. After giving a structural description of planar semimodular lattices, they proved nice results on the congruence lattices of these lattices in \cite{gknapp2}, \cite{gknapp3}, and \cite{gknapp4}. 

The lattice $\Con(\hal L)$ of all congruence relations of  $\hal L$ is the \emph{congruence lattice} of $\hal L$, and it is known to be a distributive algebraic lattice by an old result of Funayama and Nakayama~\cite{funayamanakayama}. It is a milestone in the history of lattice theory that not every distributive lattice $\hal D$ can be represented in the form of $\Con(\hal L)$; this famous result is due to Wehrung~\cite{wehrung}. However, 
\begin{equation}
\left\{\pbxskip
\parbox{8.7cm}{every finite distributive lattice $\hal D$ can be represented, up to isomorphism, as $\Con(\hal L)$ where $\hal L$ is a finite lattice.}\right.
\label{eqpboxDlRrpThtl}
\end{equation}
 This result is due to Dilworth, see \cite{bogartdilworth}, but it was not published until Gr\"atzer and Schmidt~\cite{ggsch62}.
There are several ways of generalizing \eqref{eqpboxDlRrpThtl}; the first four of the following targets are due to G.\ Gr\"atzer or to G.\ Gr\"atzer and E.\ T.\ Schmidt.
\begin{enumerate}[\upshape\kern 4pt (T1)]   
\item\label{targa} Find an $\hal L$ with nice properties in addition to $\Con(\hal L)\cong \hal D$,
\item\label{targb} find an $\hal L$ of size being as small as possible,
\item\label{targc} represent two or even more finite distributive lattices and certain isotone maps among them simultaneously,
\item\label{targd} represent a finite ordered set (also known as a  poset) as the ordered set of \emph{principal congruences} of a finite lattice, and
\item\label{targe} combine some of the targets above.
\end{enumerate}
There are dozens of results and papers addressing these targets.
The monograph Gr\a"atzer~\cite{ggbypicture1} surveyed the results of this kind available before 2006. Ten years later, the new edition \cite{ggbypicture2} became much more extensive, and the progress has not yet finished. The series of papers by Gr\"atzer and Knapp fits well into the targets listed above. Indeed, \cite{gknapp3} fits \targref{targa} by providing a \emph{rectangular lattice} $\hal L$ while, fitting both \targref{targa} and \targref{targb}, \cite{gknapp4} minimizes the size of this rectangular $\hal L$. A \emph{rectangular lattice} is a planar semimodular lattice with a pair $\pair u v\neq\pair 0 1$ of double irreducible elements such that $u\wedge v=0$ and $u\vee v=1$; 
these lattices have nice  rectangle-shaped planar diagrams.

Next, in their 2010 paper, Gr\"atzer and Nation~\cite{gratzernation} proved a stronger form of the classical Jordan--H\"older theorem for groups from the nineteenth century. 
Here we formulate their result only for  groups, but note that both \cite{gratzernation} and  ~\cite{czgschtJH}, to be mentioned soon, formulated the results for semimodular lattices. For subnormal subgroups $A\triangleleft B$ 
and $C\triangleleft D$ of a given group $G$, the quotient  $B/A$ is said to be \emph{subnormally down-and-up projective} to $D/C$ if there are subnormal subgroups $E\triangleleft F$ such that $AF=B$, 
$A\cap F=E$, $CF=D$, and $C\cap F=E$.  Gr\"atzer and Nation's result for finite groups says that whenever $\set{1}=X_0\triangleleft X_1 \triangleleft\dots \triangleleft  X_n=G$ and  $\set{1}=Y_0\triangleleft Y_1\triangleleft\dots \triangleleft  Y_m=G$ are composition series of a group $G$, then 
$n=m$ and
\begin{equation}
\left\{\pbxskip
\parbox{10.0cm}{there exists a permutation $\pi\colon\set{1,\dots,n}\to \set{1,\dots,n}$ such that $X_i/X_{i-1}$ is subnormally down-and-up projective to $Y_{\pi(i)}/Y_{\pi(i)-1}$ for all $i\in \set{1,\dots,n}$.}\right.
\label{eqpbxJHcPsdzBmTr}
\end{equation} 
(The original Jordan--H\"older theorem states only that the quotient groups 
$X_i/X_{i-1}$ and $Y_{\pi(i)}/Y_{\pi(i)-1}$  are isomorphic, because they are in the \emph{transitive closure} of subnormal down-and-up projectivity.)
Not much later,  Cz\'edli and Schmidt~\cite{czgschtJH} added that 
$\pi$ in \eqref{eqpbxJHcPsdzBmTr} is uniquely determined. The proof in \cite{czgschtJH} is based on \emph{slim} planar semimodular lattices; this concept was introduced in G\"atzer and  Knapp  \cite{gknapp1}: a planar semimodular lattice is \emph{slim} if  $\hal M_3$, the five-element nondistributive modular lattice, cannot be embedded into it in a cover-preserving way.

Next, \targref{targa}--\targref{targe} and the applicability of slim semimodular lattices for groups motivated further results on the structure of slim semimodular lattices, including Cz\'edli~\cite{czgmtxslim},
Cz\'edli and Gr\"atzer~\cite{czgggresection},  Cz\'edli, Ozsv\'art and Udvari~\cite{czgozsudv}, and Cz\'edli and Schmidt~\cite{czgschtslim1} and \cite{czgschtslim2}. 
Some results on the congruences and congruence lattices of these lattices,
including 
Cz\'edli~\cite{czgrectangrephomo}, \cite{czgpatchext}, \cite{czgnoteconslim},
Cz\'edli and Makay~\cite{czgmakay}, 
Gr\"atzer~\cite{ggswinglemma}, \cite{ggOnCzG}, and  Gr\"atzer  and Schmidt~\cite{ggsch14} and \cite{ggschshort14} have also been proved.  Neither of these two lists is complete; see the book sections  Cz\'edli and  Gr\"atzer~\cite{czgggbooksection} and Gr\"atzer~\cite{ggBSect2014} and the monograph Gr\"atzer~\cite{ggbypicture2} for additional information and references.

\subsection{Convex geometries as combinatorial structures}\label{subsect2b} 
It was an anonymous referee of Cz\'edli, Ozsv\'art and Udvari~\cite{czgozsudv} who pointed out that slim semimodular lattices can be viewed as convex geometries of convex dimension at most 2; see Proposition~\ref{propzBjS} and the paragraph following it in the present paper.  As the first consequence of this remark, 
Adaricheva and Cz\'edli~\cite{kaczg} and Cz\'edli~\cite{czgcoord}
gave a lattice theoretical new proof of the ``coordinatizability'' of convex geometries by permutations; the original combinatorial result is due to Edelman and Jamison~\cite{edeljam}. 

As we know from  Monjardet~\cite{monjardet}, convex geometries 
are so important that they had been discovered or rediscovered in many equivalent forms even by 1985 not only as  combinatorial structures but also as lattices. This explains that the terminology is far from being unique. Here we go after the terminology used in Cz\'edli~\cite{czgcoord} even when much older results are cited. 
If the reader is interested in further information on convex geometries, he may turn to \cite{czgcoord} for a limited survey or to Adaricheva and Nation~\cite{kirajbbooksection} for a more extensive treatise.  To keep the  size limited, we do not mention antimatroids and  meet-distributivity; see the survey part of Cz\'edli~\cite{czgcoord} for references on them.

In order to give a \emph{combinatorial definition}, the \emph{power set} of a given \emph{finite} set $E$ will be denoted by 
  $\pow E:=\set{X: X\subseteq E}$. 
If a map $\Phi\colon\pow E\to\pow E$ satisfies the rules $X\subseteq \Phi(X)\subseteq \Phi(Y)=\Phi(\Phi(Y))$ for all $X\subseteq Y\subseteq E$, 
then $\Phi$ is a \emph{closure operator} over the set $E$. 
A pair  $\pvpair E \Phi$ is a \emph{convex geometry} if $E$ is a nonempty set, $\Phi$ is a closure operator over $E$,  $\Phi(\emptyset)=\emptyset$, and, for all $p,q\in E$ and $X=\Phi(X)\in \pow E$, the \emph{anti-exchange property}
\begin{equation}
(\,\,p\neq q,\,\,p\notin X,\,\, q\notin X, \,\, p\in\Phi(X\cup \set{q})\,\,)
\Rightarrow q\notin\Phi(X\cup \set{p})
\end{equation} 
holds. For example, if $E$ is a finite set of points of $\preal$, then we obtain a convex geometry $\pvpair E \Phi$ by letting
\begin{equation}
\Phi\colon \pow E\to\pow E\,\text{ defined by }\, \Phi(X):=E\cap\rhullo 2 (X).
\label{eqtxthFgjRcnpmnt}
\end{equation}

The \emph{dual} of a lattice $\hal K=\tuple{K; \vee,\wedge}$ is denoted by    $\dual{\hal K}:=\tuple{K; \wedge,\vee}$. For $x\in \hal K$, let $x^\ast:=\bigvee\set{y: x\prec y}$. Let $\hal M_3$ denote the five-element modular non-distributive lattice.  By a \emph{join-distributive lattice} we mean a semimodular lattice of finite length that does not include $\hal M_3$  a sublattice.  (This concept should not  be confused with join-semidistributivity.)  Equivalently,  a semimodular lattice $\hal K$ of finite length is \emph{join-distributive} if the interval $[x,x^\ast]$ is a distributive lattice for all $x\in \hal K\setminus\set 1$; this is the definition that explains the current terminology. From the literature, Cz\'edli~\cite[Proposition 2.1]{czgcoord}  collects eight equivalent definitions of join-distributivity; the oldest one of them is due to Dilworth~\cite{dilworth1940}.

Given a convex geometry $\pvpair E \Phi$,  the set $\set{X\in \pow E: X=\Phi(X)}$ of \emph{closed sets} forms a lattice with respect to set inclusion $\subseteq$. The \emph{dual} of this lattice will be denoted by  $\Lat{\pvpair E \Phi}$. As usual, a   set $X\in\pow E$ is called \emph{open} if $E\setminus X$ is closed. With this terminology, $\Lat{\pvpair E \Phi}$ can be considered as the lattice of open subsets of $E$  with respect to set inclusion $\subseteq$.

For a finite lattice $\hal K$, the set $J(\hal K)$ of (non-zero)  \emph{join-irreducible elements} is defined as 
$\set{x\in \hal K:\text{there is exactly one }y\in\hal K\text{ with }y\prec x}$. Next, for a finite lattice $\hal L$, we define a closure operator
\begin{align*}
\Phi_{\dual{\hal L}}&\colon \pow{J(\dual{\hal L})} \to \pow{J(\dual{\hal L})}\,\,\text{ by}\cr
\Phi_{\dual{\hal L}}(X)&:=
\Bigl\{ y\in J(\dual{\hal L}): y  \mathrel{\leq_{\dual{\hal L}}}
\bigvee_{\dual{\hal L}} X \Bigr\}, \text{ and we let }
\cr
\Cgeo{\hal L}&:=\pvpair{J(\dual{\hal L})}{\Phi_{\dual{\hal L}}}.
\end{align*}
Of course, the inequality above is equivalent to  $y\geq \bigwedge X$ in $\hal L$ and  $J(\dual{\hal L})$ equals the set of meet-irreducible elements of $\hal L$.
The following proposition, cited as the combination of Proposition 7.3 and  Lemma 7.4   in \cite{czgcoord}, is due to   Adaricheva, Gorbunov,  and Tumanov~\cite{kiragorbunov} and  Edelman~\cite{edelman}.
 
\begin{proposition}\label{propzBjS}
Let $\pvpair E\Phi$  and   $\hal L$ be a convex geometry and a join-distributive lattice, respectively. Then  
$\Lat{\pvpair E \Phi}$ is a join-distributive lattice, $\Cgeo{\hal L}$ is a convex geometry, and, in addition, we have that $\Cgeo{\Lat{\pvpair E \Phi}}\cong \pvpair E \Phi$ and 
$\Lat{\Cgeo{\hal L}}\cong \hal L$.
\end{proposition}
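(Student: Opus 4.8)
The plan is to treat the statement as four linked claims — that $\Lat{\pvpair E\Phi}$ is join-distributive, that $\Cgeo{\hal L}$ is a convex geometry, and that the two constructions are mutually inverse up to isomorphism — and to prove them by exploiting the dictionary between the closure-theoretic language of convex geometries and the order-theoretic language of lattices. Throughout I would lean on the list of eight equivalent definitions of join-distributivity collected in Cz\'edli~\cite[Proposition 2.1]{czgcoord}, since the right choice of reformulation makes each verification almost mechanical. The guiding principle is that the closed sets of $\pvpair E\Phi$, ordered by inclusion, form the order-dual of $\Lat{\pvpair E\Phi}$, and that the join-irreducibles of $\dual{\hal L}$ — equivalently, the meet-irreducibles of $\hal L$ — are exactly the \quot{points} out of which $\Cgeo{\hal L}$ is built.

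For the first claim I would note that $\Lat{\pvpair E\Phi}$ has finite length because $E$ is finite, and then show that the anti-exchange property is precisely what forces the closed-set lattice to be meet-semimodular and $\hal M_3$-free; dually, $\Lat{\pvpair E\Phi}$ is then semimodular and $\hal M_3$-free, hence join-distributive. Concretely, for a closed set $X$ I would analyse the minimal closed proper extensions of $X$, which are among the $\Phi(X\cup\set p)$ for $p\notin X$; the anti-exchange property guarantees that distinct such $p$ yield incomparable covers whose joins behave Booleanly, which is one of the standard reformulations of join-distributivity. For the second claim I would verify directly that $\Phi_{\dual{\hal L}}$ is extensive, monotone and idempotent (all immediate from the properties of $\bigvee_{\dual{\hal L}}$) and that $\Phi_{\dual{\hal L}}(\emptyset)=\emptyset$ because the bottom of $\dual{\hal L}$ is not a non-zero join-irreducible. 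The anti-exchange property for $\Cgeo{\hal L}$ is then the order-theoretic shadow of join-distributivity of $\hal L$: if $p\in\Phi_{\dual{\hal L}}(X\cup\set q)$ but $p\notin\Phi_{\dual{\hal L}}(X)$, a failure of anti-exchange would produce a copy of $\hal M_3$, or violate local distributivity of an interval $[x,x^\ast]$, contradicting join-distributivity.

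For the two isomorphisms I would set up the classical representation of a finite lattice by its meet-irreducibles. For $\Lat{\Cgeo{\hal L}}\cong \hal L$, the map $a\mapsto \set{m\in J(\dual{\hal L}): m\geq_{\hal L} a}$ sends each $a\in\hal L$ to a closed set of $\Cgeo{\hal L}$; since every element of a finite lattice is the meet of the meet-irreducibles above it, this map is a bijection onto the closed sets, and it reverses inclusion, so composing with the order-reversal built into $\Lat{}$ yields an isomorphism onto $\hal L$. For $\Cgeo{\Lat{\pvpair E\Phi}}\cong\pvpair E\Phi$ I would identify the join-irreducibles of $\dual{\Lat{\pvpair E\Phi}}$ with the points of $E$ — each $p$ corresponding to the closed set it generates — and check that transporting $\Phi$ along this identification reproduces the closure operator of $\Cgeo{\Lat{\pvpair E\Phi}}$; the anti-exchange property is what makes the correspondence between points and irreducibles one-to-one rather than merely onto.

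The main obstacle I expect is bookkeeping with the order-duals: both $\Lat{}$ and $\Cgeo{}$ build in an order reversal (closed versus open sets, $\hal L$ versus $\dual{\hal L}$), and the two isomorphisms only come out order-preserving if every reversal is tracked consistently. The genuinely substantive point, rather than routine, is the equivalence at the heart of the first two claims — that the anti-exchange property and join-distributivity are two faces of the same condition — and here I would invoke whichever of the eight equivalent forms of join-distributivity matches the combinatorics of the minimal closed extensions most directly, rather than proving that equivalence by hand.
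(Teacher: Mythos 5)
There is nothing in the paper to compare your argument against: Proposition~\ref{propzBjS} is stated without proof, being quoted as the combination of Proposition~7.3 and Lemma~7.4 of Cz\'edli~\cite{czgcoord} and attributed to Adaricheva, Gorbunov, and Tumanov~\cite{kiragorbunov} and to Edelman~\cite{edelman}. So the only question is whether your sketch would work as a self-contained proof, and in outline it would: it is essentially the classical argument from those sources (closed sets ordered by inclusion are the dual of $\Lat{\pvpair E\Phi}$; points of $E$ correspond to the join-irreducible closed sets; elements of $\hal L$ correspond to the sets of meet-irreducibles above them). Your treatment of the two isomorphisms is correct and complete enough, including the observation that anti-exchange is what makes $p\mapsto\Phi(\set p)$ injective.

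Where the sketch stops short of a proof is exactly at the step you yourself identify as the substantive one: the equivalence between the anti-exchange property and join-distributivity. Saying that a failure of anti-exchange ``would produce a copy of $\hal M_3$ or violate local distributivity'' is the statement to be proved, not an argument for it, and appealing to ``whichever of the eight equivalent forms matches best'' defers the same content to \cite{czgcoord}. The standard way to fill this in is the covering lemma: anti-exchange together with $\Phi(\emptyset)=\emptyset$ implies that $X\prec Y$ in the closed-set lattice forces $|Y\setminus X|=1$, from which lower semimodularity, the Boolean structure of the interval between a closed set and the join of its upper covers, and the absence of $\hal M_3$ all follow; conversely, join-distributivity of $\hal L$ gives anti-exchange for $\Phi_{\dual{\hal L}}$ via the distributivity of the intervals $[x,x^\ast]$. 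If you either carry out that lemma or cite \cite[Proposition 2.1]{czgcoord} explicitly for the equivalence, your outline becomes a correct proof; as written it is a correct plan with its central step outsourced.
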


This proposition allows us to say that convex geometries and join-distributive lattices capture basically the same concept. Based on Proposition~\ref{propzBjS} and the theory of planar semimodular lattices summarized in Cz\'edli and Gr\"atzer~\cite{czgggbooksection}, we can say that a convex geometry $\pvpair E \Phi$ is of 
\emph{convex dimension} at most 2 if $\Lat{\pvpair E \Phi}$ is a slim semimodular lattice.

\subsection{From lattices to convex geometries  by means of trajectories} \label{subsect2c}
In order to describe the first step from Gr\"atzer and Knapp~\cite{gknapp1,gknapp2,gknapp25,gknapp3,gknapp4} and Gr\"atzer and Nation~\cite{gratzernation}
towards geometry, we need to define trajectories. If $a\prec b$ in a finite lattice $\hal K$, then 
$[a,b]$ is called a \emph{prime interval} of $\hal K$. The \emph{set of prime intervals} of $\hal K$ will be denoted by $\printv(\hal K)$. 
Two prime intervals,  $[a_0,b_0], [a_1,b_1]\in \printv(\hal K)$, are \emph{consecutive} if $a_i=a_{1-i}\wedge b_i$ and $b_{1-i}=a_{1-i}\vee b_i$ hold for some $i\in\set{0,1}$. The reflexive-transitive closure of consecutiveness is an equivalence relation on $\printv(\hal K)$, and its classes are called the \emph{trajectories} of $\hal K$.

Trajectories  were introduced in Cz\'edli and Schmidt~\cite{czgschtJH}, and they played the key role in proving the uniqueness of $\pi$ in \eqref{eqpbxJHcPsdzBmTr}. Soon afterwards, trajectories were intensively used when dealing with congruence lattices of  slim planar semimodular lattices, because for $x\prec y$ and $a\prec b$ is such a lattice, one can describe with the help of trajectories whether 
the least congruence $\con(a,b)$ collapsing $\pair a b$ contains (in other words, collapses) $\pair x y$. Later, similarly to trajectories, a beautiful description of the containment $\pair x y\in \con(a,b)$ was described by Gr\"atzer's Swing Lemma; see Gr\"atzer~\cite{ggswinglemma}, and see Cz\'edli, Gr\"atzer, and Lakser~\cite{czggglakser} and Cz\'edli and Makay~\cite{czgmakay} for a generalization and for alternative approaches. Note that 
Lemma~2.36 in Freese, Je\v zek, and Nation~\cite[page 41]{freesejezeknation}, which is due to
J\'onsson and Nation~\cite{jonssonnation} originally, offers an alternative way to describe whether $\pair x y\in \con(a,b)$.

For distinct prime intervals $[a_0,b_0], [a_1,b_1]\in \printv(\hal K)$, we say that $[a_0,b_0]$ and $[a_1,b_1]$ are \emph{comparable} if either $b_0\leq a_1$, or $b_1\leq a_0$. It was proved in Adaricheva and Cz\'edli~\cite{kaczg} that 
\begin{equation}
\left\{\pbxskip
\parbox{9.4cm}{a finite semimodular lattice $\hal L$ is join-distributive if and only if no two distinct comparable prime intervals of $\hal L$ belong to the same trajectory.}\right.
\label{eqpbxTrhGnhrsn}
\end{equation}
Combining \eqref{eqpbxTrhGnhrsn} with Proposition~\ref{propzBjS}, we obtain a new description of convex geometries.

\subsection{Representing convex geometries} \label{subsect2d}
Using the usual convex hull operator $\rhullo n$ together with auxiliary points in a tricky way,  Kashiwabara, Nakamura, and  Okamoto~\cite{kashiwabaraatalshelling} gave a representation theorem
for convex geometries in 2005. The  example described in \eqref{eqtxthFgjRcnpmnt} is simpler, 
but it is not appropriate to represent \emph{every} convex geometry because of a very simple reason: if $\pvpair E\Phi$ is a convex geometry of the form described in \eqref{eqtxthFgjRcnpmnt}, then 
$J(\Lat{\pvpair E\Phi})$ is an antichain, which is not so for every convex geometry. Hence, Cz\'edli~\cite{czgcircles} introduced the following construction.  Let $E$ be a finite set of circles in the plane $\preal$, and define a convex geometry $\pvpair E\Phi$ where
$\Phi\colon\pow E\to\pow E$ is defined by 
\begin{equation}
\Phi(X):=\Bigl\{C\in E: C\subseteq \rhullo 2\Bigl(\bigcup_{D\in X} D\Bigr)\Bigr\}.
\label{eqdzhGbRcrtlPs}
\end{equation}
It is easy to see that we obtain a convex geometry in this way. Note, however, that \eqref{eqdzhGbRcrtlPs} does not yield a convex geometry in general if, say, $E$ is a set of triangles rather than a set of circles. 
After translating the problem to lattice theory with the help of Proposition~\ref{propzBjS} and using the toolkit developed for slim semimodular lattices in the papers mentioned in Subsection~\ref{subsect2a}, Cz\'edli~\cite{czgcircles} proved that
\begin{equation}
\left\{\pbxskip
\parbox{9.5cm}{every convex geometry of convex dimension at most $2$ can represented by circles in the sense of \eqref{eqdzhGbRcrtlPs}.}\right.
\label{eqpbxTmPvMqcBp}
\end{equation}
In fact, \cite{czgcircles} proves a bit more. 
While \cite{czgcircles} is mainly a lattice theoretical paper, 
it was soon followed by two results with proofs that are geometrical.
First, Richter and Rogers~\cite{richterrogers} represented every convex geometry analogously to \eqref{eqdzhGbRcrtlPs} but using polygons instead of circles. Second, Cz\'edli and Kincses~\cite{czgkj} replaced polygons with objects taken from an appropriate family of so-called ``almost circles''. However, it was not known at that time whether circles would do instead of ``almost circles".

\subsection{Some results of  geometrical nature}
The problem whether every convex geometry can be represented by circles in the sense of \eqref{eqdzhGbRcrtlPs} was solved in negative by Adaricheva and Bolat~\cite{kabolat}. The main step in their argument is the proof of \cite[Theorem 3.1]{kabolat}; see Corollary~\ref{corolmain} here. In fact, they proved that \eqref{eqpbxThmlnygrSmzWrB}, with self-explanatory syntactical refinements, holds even for arbitrary three \emph{circles} $A_0, A_1, A_2$ and two additional circles, $\,\hal U_0,\hal U_1$. 
This is such an obstacle that does not allow to represent every  convex geometry by circles.  
Even more is true;  later, Kincses~\cite{kincses} found an Erd\H os--Szekeres type obstruction for representing convex geometries by \emph{ellipses}. Similarly to ellipses, he could exclude many other shapes. On the positive side,
Kincses~\cite{kincses} proved that every convex geometry can be represented by \emph{ellipsoids} in $\real^n$ for some $n\in\nplu:=\set{1,2,3,\dots}$ in the sense of \eqref{eqdzhGbRcrtlPs} with $\rhullo n$ instead of $\rhullo 2$. 
However, it is not known whether $n$-dimensional balls could do instead of ellipsoids.

An earlier attempt to generalize Adaricheva and Bolat~\cite[Theorem 3.1]{kabolat}, see Corollary~\ref{corolmain} here, did not use  homotheties and resulted in a new characterization of disks. Namely, for a convex compact set $\,\hal U_0\subseteq \preal$,  Cz\'edli~\cite{czgCharc} proved that
\begin{equation}
\left\{\pbxskip
\parbox{10.0cm}{$\hal U_0$ is a disk if and only if for every isometric copy $\,\hal U_1$ of $\,\hal U_0$ and for any points $A_0,A_1,A_2\in\preal$, \eqref{eqpbxThmlnygrSmzWrB} holds.}\right.
\label{eqpbxChRcrCzg}
\end{equation}
The condition on $\,\hal U_1$ above means that there exists a  distance-preserving geometric transformation $\phi\colon\preal\to\preal$ such that $\,\hal U_1=\phi(\hal U_0)$. 

There are quite many known characterizations of circles and disks;  we mention only one of them below. 
We say that $\,\hal U_0$ and $\,\hal U_1$ are \emph{Fejes-T\'oth crossing} if none of the sets $\,\hal U_0\setminus\hal U_1$ and 
$\hal U_1\setminus\hal U_0$ is path-connected.  It was proved in 
Fejes-T\'oth~\cite{fejestoth} that 
\begin{equation}
\left\{\pbxskip
\parbox{8.5cm}{a convex compact set $\,\hal U_0\subseteq \preal$ is a disk if and only if there is no  isometric copy $\,\hal U_1$ of $\,\hal U_0$ such that  $\,\hal U_0$ and $\,\hal U_1$ are Fejes-T\'oth crossing.}\right.
\label{eqpbxChRcFTW}
\end{equation}
Motivated by the proof of \eqref{eqpbxChRcrCzg}, a more restrictive concept of crossing was introduced in Cz\'edli~\cite{czgCrossing};  it is based on properties of common supporting lines but we will not  define it here. Replacing Fejes-T\'oth crossing with ``\cite{czgCrossing}-crossing'', \eqref{eqpbxChRcFTW} turns into a stronger statement.

Finally, to conclude our mini-survey from George Gr\"atzer's congruence lattices to geometry via a sequence of closely connected 
consecutive results, we note that 
Paul  Erd\H{o}s and E.\ G.\ Straus~\cite{erdosstrauss} extended \eqref{eqpbxChRcFTW} to an analogous characterization of balls in higher dimensions, but the ``\cite{czgCrossing}-crossing'' seems to work only in the plane~$\preal$.

\section{Proofs for the edge-free case}\label{sect3}
As usual in lattice theory, $\,\hal U\subset \hal V$ means the conjunction of $\,\hal U\subseteq \hal V$ and $\,\hal U\neq \hal V$. If $\hal V$ is a \emph{compact} subset of $\preal$, then we often write $\hal V\subset \preal$ since $\hal V\neq\preal$ holds automatically. For compact sets $\,\hal U,\hal V\subseteq\preal$, 
\[\rhullo 2\bigl( \hal U\cup \hal V)= \rhullo 2\bigl(\rhullo 2(\hal U)\cup \hal V\bigr).
\] 
Hence, the inclusion in the third line of \eqref{eqpbxThmlnygrSmzWrB} is equivalent to
the inclusion 
\[
\rhullo 2 (\hal U_{1-k}) \subseteq \rhullo 2\bigl(\rhullo 2(\hal U_k)\cup (\set{A_0,A_1, 
A_2}\setminus\set{A_j})\bigr).
\]
Also, if $\,\hal U$ is compact, then so is $\rhullo2(\hal U)$; see, for example, the first sentence of the introduction in  H\"usseinov~\cite{husseinov}. Thus,  it suffices to prove our theorem only for \emph{convex} compact subsets of $\preal$. Therefore, in the rest of the paper, 
\begin{equation}
\left\{\pbxskip
\parbox{6.7cm}{we will always assume that $\,\hal U$, $\,\hal U_0$, and $\,\hal U_1$ are compact and \emph{convex},}\right.
\label{eqtxtmndgknvx}
\end{equation}
even if this is not repeated all the time.

The advantage of assumption \eqref{eqtxtmndgknvx} lies in the fact that  the properties of planar convex compact sets are well understood. For example, if $ \hal U\subset \preal$ is such a set, then  the boundary $\partial \hal U$ of $\,\hal U$ is known to be a simple closed continuous rectifiable curve; see Latecki, Rosenfeld, and Silverman \cite[Thm. 32]{latecki} and Topogonov \cite[page 15]{topogonov}.
Since the reader need not be a geometer, we note that all what we need to know about planar convex sets are surveyed in a short section of the open access paper Cz\'edli and Stach\'o~\cite{czgstacho}. Some facts about these sets, however, are summarized in the next subsection for the reader's convenience.

\subsection{Supporting lines and a comparison with the case of circles} 
Let 
\begin{equation}
\text{$\Cunit$ denote the \emph{unit circle} 
$\set{\pair x y: x^2+y^2=1}$;}
\label{eqtxtCunit}
\end{equation}
its elements are called \emph{directions}. 
In the rest of the paper,  we often assume that the 
lines $\ell$ in our considerations are \emph{directed lines}; their directions are denoted by $\dir\ell\in\Cunit$ and by  arrows in our figures. A directed line $\ell$ determines two \emph{closed} halfplanes; their intersection is $\ell$. A subset of $\preal$ is \emph{on the left} of $\ell$ if each of its points belongs to the left closed halfplane. Points in the left halfplane of $\ell$ but not on $\ell$ are \emph{strictly on the left} of $\ell$; points \emph{strictly on the right} of $\ell$ are defined analogously.
We always assume that 
\begin{equation}
\left\{\pbxskip
\parbox{7.9cm}{
a supporting line of a set $\,\hal U$ is directed, and it is directed so that $\,\hal U$ is on its left.}\right.
\label{eqpbxSpRtlFt}
\end{equation}
Since every compact convex set in the plane is well known to be  the intersection of the left halfplanes of its supporting lines, we have that 
\begin{equation}
\left\{\pbxskip
\parbox{9.3cm}{if a point $P\in\preal$ does \emph{not} belong to a compact convex set $\,\hal U$, then $\,\hal U$ has a directed supporting line $\ell$ such that $P$ is strictly on the right of $\ell$.}\right.
\label{eqpbxSpRtlNjbbr}
\end{equation} %
If $\ell$ is a supporting line of a compact convex set $\,\hal U$, then the points of $\,\hal U\cap \ell$
are called \emph{support points}. If $\ell$ is the only directed supporting line through a support point $P\in\hal U\cap\ell$, then $\ell$ is a \emph{tangent line} and  $P$  is a \emph{tangent point}. Otherwise, we say that $P$ is a \emph{vertex} of $\,\hal U$. The properties of directed supporting lines are summarized in the open access papers Cz\'edli~\cite{czgCharc} and Cz\'edli and Stach\'o~\cite{czgstacho}, or in the more advanced treatise Bonnesen and Fenchel~\cite{bonfench}.  
In particular, by a \emph{pointed supporting line} of $\,\hal U$ we mean a pair $\pair P\ell$ such that $\ell$ is a directed supporting line of $\,\hal U$ with support point $P$.
In general, $\,\hal U$ may have pointed supporting lines  $\pair {P_1}\ell$ and $\pair {P_2}\ell$ with the same line component but distinct support points $P_1\neq P_2$.

In Cz\'edli~\cite{czganeasy}, which is devoted only to circles, there is a relatively short proof of Adaricheva and Bolat~\cite[Theorem 3.1]{kabolat}, cited as Corollary~\ref{corolmain} here.
Most ideas of \cite{czganeasy} are used in the present paper,
but these ideas need substantial changes in order to overcome the following three difficulties: as opposed to circles, a compact convex set need not have a center with nice geometric properties, its boundary  need not have a tangent line at each of its points, and the boundary  can  include straight line segments of positive lengths. In this section, we disregard the latter difficulty by  calling a compact convex set $\,\hal U$ 
\emph{edge-free} if  no line segment of positive length is a subset of $\partial \hal U$. Equivalently, a  compact convex set $\,\hal U\subset \preal$ is said to be \emph{edge-free} if $\ell\cap \hal U$ is a singleton (still equivalently, if $\ell\cap \partial \hal U$ is a singleton) for every supporting line $\ell$ of $\,\hal U$. 
Note that every singleton subset of $\preal$ is an edge-free compact convex set. Let us emphasize that an edge-free set is \emph{nonempty} by definition. In order to shed even more light on the concept just introduced, we formulate and prove an easy lemma.

\begin{lemma}\label{lemmaNoThree} A nonempty compact convex set $\,\hal U$ is edge-free if and only if $\ell\cap \partial \hal U$ consists of at most two points for every line $\ell$. 
\end{lemma}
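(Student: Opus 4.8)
The plan is to prove the two implications separately, and in each case it is cleanest to argue by contraposition, using throughout the characterization of edge\-freeness stated just above the lemma: $\hal U$ is edge-free exactly when $\ell\cap\hal U$ is a singleton for every supporting line $\ell$. I will also use repeatedly that $\partial\hal U\subseteq\hal U$, since $\hal U$ is closed.

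First I would dispatch the easy implication. Assume $\hal U$ is \emph{not} edge-free; then, by definition, some line segment $S$ of positive length is contained in $\partial\hal U$. Letting $\ell$ be the line through $S$, we get $\ell\cap\partial\hal U\supseteq S$, so this single line meets $\partial\hal U$ in infinitely many points, in particular in more than two. This contradicts the right-hand condition, which establishes the implication ``the right-hand condition $\Rightarrow$ $\hal U$ is edge-free.''

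For the converse I would again argue by contraposition: suppose some line $\ell$ meets $\partial\hal U$ in at least three points. Since these points are collinear, one of them, say $P_2$, lies strictly between two others $P_1,P_3$, so $P_2=\lambda P_1+(1-\lambda)P_3$ with $\lambda\in(0,1)$. As $P_2\in\partial\hal U$, there is a supporting line $\ell'$ of $\hal U$ through $P_2$; the existence of a supporting line at each boundary point is standard and can be extracted from \eqref{eqpbxSpRtlNjbbr} by a compactness argument over $\Cunit$. The heart of the matter is then to force $P_1$ and $P_3$ onto $\ell'$ as well: writing the left closed halfplane of $\ell'$ as $\{X:f(X)\le c\}$ for a suitable affine functional $f$ with $\ell'=\{X:f(X)=c\}$, we have $f(P_1)\le c$ and $f(P_3)\le c$ because $P_1,P_3\in\hal U$, while $f(P_2)=c$ since $P_2\in\ell'$; substituting $P_2=\lambda P_1+(1-\lambda)P_3$ and using $\lambda,1-\lambda>0$ forces $f(P_1)=f(P_3)=c$, that is, $P_1,P_3\in\ell'$. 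Hence $\{P_1,P_2,P_3\}\subseteq\ell'\cap\hal U$, so the supporting line $\ell'$ meets $\hal U$ in more than one point, and by the supporting-line characterization $\hal U$ is not edge-free.

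The only genuinely substantive steps are the two facts just invoked: the existence of a supporting line through the chosen boundary point $P_2$, and the affine-functional computation pinning the two outer points to that supporting line. I expect the latter to be the crux, since it is exactly what converts ``three collinear boundary points'' into ``a supporting line whose intersection with $\hal U$ is not a singleton,'' i.e.\ into an edge. Everything else is routine: selecting the middle point $P_2$, the inclusion $\partial\hal U\subseteq\hal U$, and the degenerate cases in which $\hal U$ is a singleton (where the right-hand hypothesis of the contrapositive is vacuous) or a segment (where the line of the segment already serves as the required supporting line), all of which are absorbed uniformly into the argument above.
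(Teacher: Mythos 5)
Your proof is correct and follows essentially the same route as the paper: one direction by exhibiting a line through an edge (the paper takes a supporting line with two boundary points and their midpoint, you take the line of the offending segment directly), and the converse by taking a supporting line through the middle of three collinear boundary points and exploiting betweenness. Your affine-functional computation is just the algebraic form of the paper's observation that $P_1$ and $P_3$ would otherwise lie strictly on opposite sides of that supporting line, so the two arguments coincide in substance.
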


\begin{proof} We can assume that $\,\hal U$ is not a singleton since otherwise the statement is trivial. 

First, assume that $\,\hal U$ is \emph{not} edge-free, and pick a supporting line $\ell$ of $\,\hal U$ with two distinct points,  $P_1,P_2\in \ell\cap\partial U$. Let $P_3=(P_1+P_2)/2$; it belongs to $\,\hal U$ by convexity. Since $P_3$
lies on a supporting line, it is not in the interior of $\,\hal U$. Hence,  $P_1,P_2,P_3\in \ell\cap \partial \hal U$, which shows that $\ell\cap \partial \hal U$ consists of more than two points; this implies the ``if'' part of the lemma. 

Second, assume that $\,\hal U$ is edge-free and $\ell$ is a directed line in the plane; we need to show that $\ell\cap \partial \hal U$ consists of at most two points. Suppose the contrary, and let $P_1$, $P_2$, and $P_3$ be three distinct points of $\ell$, in this order, such that they all belong also to $\partial \hal U$.
Pick a supporting line $\ell_2$ of $\,\hal U$ through $P_2$.
Since $\,\hal U$ is edge-free, $\ell_2\cap\partial\hal U$ is a singleton, whereby none of $P_1$ and $P_3$ lies on $\ell_2$.  
Therefore, since  $P_2$ is between $P_1$ and $P_3$, we have that $P_1\in \hal U$ and $P_3\in \hal U$ are strictly on different sides of $\ell_2$; contradicting \eqref{eqpbxSpRtlFt}.  
\end{proof}

Our target in the present section is to prove the following lemma.

\begin{lemma}[Main Lemma]\label{lemmaEFmain}
If the points  $A_0,A_1,A_2\in\preal$ and the \emph{convex} compact sets $\,\hal U_0, \hal U_1\subset \preal$ from Theorem~\textup{\ref{thmmain}} satisfy at least one of the conditions \eqref{thmmaina}, \eqref{thmmainb}, and \eqref{thmmainc} given in the theorem and, in addition,  
\begin{enumerate}[\upshape{\indent}(a)]
\setcounter{enumi}{3}
\item\label{thmmaind} 
$\hal U_0$ and $\,\hal U_1$ are \emph{edge-free},
\end{enumerate}
then implication~\eqref{eqpbxThmlnygrSmzWrB} holds.
\end{lemma}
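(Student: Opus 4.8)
The plan is to pass to support functions and reduce the entire statement to a one-line pigeonhole count on the three normal cones of the triangle. Throughout I use the reduction to convex sets already granted in \eqref{eqtxtmndgknvx}. Put $T:=\rhullo 2(\set{A_0,A_1,A_2})$, and for a convex compact $\hal V\subseteq\preal$ and a unit vector $n\in\Cunit$ write $h_{\hal V}(n)=\max_{x\in \hal V}\langle x,n\rangle$; the supporting line of $\hal V$ with outer normal $n$ is $\{x:\langle x,n\rangle=h_{\hal V}(n)\}$, and $\hal V$ lies in the halfplane $\langle x,n\rangle\le h_{\hal V}(n)$. Set $e_j:=\rhullo 2(\set{A_0,A_1,A_2}\setminus\set{A_j})$ for the edge opposite $A_j$, and let $N_j$ be the normal cone of $A_j$, the closed arc $\{n:\langle A_j,n\rangle=\max_i\langle A_i,n\rangle\}$, on whose interior $A_j$ is the unique farthest vertex. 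Since the support function of a convex hull of a union is the pointwise maximum, and since $\hal P\subseteq\hal Q$ is equivalent to $h_{\hal P}\le h_{\hal Q}$ for convex compact sets, the target inclusion $\hal U_{1-k}\subseteq\rhullo 2\bigl(\hal U_k\cup(\set{A_0,A_1,A_2}\setminus\set{A_j})\bigr)$ becomes
\[
h_{\hal U_{1-k}}(n)\le\max\bigl(h_{\hal U_k}(n),\,h_{e_j}(n)\bigr)\qquad\text{for every }n\in\Cunit.
\]

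First I would localize the possible failures of this inequality. From $\hal U_{1-k}\subseteq T$ we get $h_{\hal U_{1-k}}\le h_T$, and because $h_T(n)=\max_i\langle A_i,n\rangle>\max_{i\neq j}\langle A_i,n\rangle=h_{e_j}(n)$ holds exactly when $n\in\operatorname{int}N_j$, the strict inequality $h_{\hal U_{1-k}}(n)>h_{e_j}(n)$ forces $n\in\operatorname{int}N_j$. Introducing $\delta(n):=h_{\hal U_1}(n)-h_{\hal U_0}(n)$, a failure for $k=0$ would need both $\delta(n)>0$ and $n\in\operatorname{int}N_j$; hence the displayed inequality holds everywhere as soon as $N_j\subseteq\{n:\delta(n)\le 0\}$. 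Interchanging $\hal U_0$ and $\hal U_1$ (that is, $\delta\mapsto-\delta$), the case $k=1$ succeeds as soon as $N_j\subseteq\{n:\delta(n)\ge 0\}$. Thus it suffices to find a vertex $A_j$ whose normal cone lies in one of the two closed sets $\{\delta\le 0\}$, $\{\delta\ge 0\}$, and to read off $k$ from which one it is.

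The core is therefore the sign pattern of $\delta$. If $\delta\ge 0$ (resp.\ $\le 0$) everywhere, then $\hal U_0\subseteq\hal U_1$ (resp.\ $\hal U_1\subseteq\hal U_0$) and any $j$ works with the suitable $k$; so assume $\delta$ changes sign. I claim that in each of the cases \eqref{thmmaina}--\eqref{thmmainc} the zero set $\{n:\delta(n)=0\}$ — precisely the directions in which $\hal U_0$ and $\hal U_1$ share a supporting line — consists of exactly two directions $n_\ast,n_{\ast\ast}$, with $\{\delta>0\}$ and $\{\delta<0\}$ the two open arcs they bound. For a translation $\hal U_1=\hal U_0+v$ one has $\delta(n)=\langle v,n\rangle$, whose zeros are the two normals perpendicular to $v$. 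For a positive homothety with $\lambda\neq 1$ and centre $P\notin\hal U_0$ one computes $\delta(n)=(\lambda-1)\,h_{\hal U_0-P}(n)$; since $P\notin\hal U_0$, the set $\{n:h_{\hal U_0-P}(n)\le 0\}$ is a pointed sector, so its boundary $\{n:h_{\hal U_0-P}(n)=0\}$ consists of exactly two directions, the outer normals of the two supporting lines of $\hal U_0$ through $P$ (the sub-case $P\in\hal U_0$ gives containment, hence is trivial). The singleton case is of the same shape, with $n_\ast,n_{\ast\ast}$ the normals of the two tangent lines from the point to the other body.

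Finally I would close with the pigeonhole step. The three normal cones $N_0,N_1,N_2$ tile $\Cunit$ into arcs whose angular measures are $\pi$ minus the three interior angles; hence each is strictly less than $\pi$ and the three interiors are pairwise disjoint. As $\{n_\ast,n_{\ast\ast}\}$ has only two elements, at most two of the three open cones can meet it, so some $N_j$ has $\operatorname{int}N_j\cap\{n_\ast,n_{\ast\ast}\}=\emptyset$; being connected and missing both separating directions, this $N_j$ lies entirely in one arc, i.e.\ $N_j\subseteq\{\delta\le 0\}$ or $N_j\subseteq\{\delta\ge 0\}$, and choosing $k$ accordingly furnishes the required $j$ and $k$. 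I expect the only real obstacle to be the case-uniform verification that $\{\delta=0\}$ is a two-point set together with the localization of failures to $\operatorname{int}N_j$; once these are in place the counting is immediate. (Collinear $A_i$, or a one-point $\hal U_i$, are degenerate and checked by hand; I note that the edge-free hypothesis \eqref{thmmaind} is not needed in this support-function argument.)
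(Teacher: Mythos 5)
Your argument is correct, and it takes a genuinely different route from the paper's. The paper proves the Main Lemma by a deformation argument: after disposing of the singleton case (Lemma~\ref{lemmasinglton}), it shrinks both bodies by homotheties $\hthet{P_i}\xi$ toward interior points, shows that the set $H$ of good parameters has a maximum $\xi<1$, and derives a contradiction by analysing where $\partial \hal U_1(\xi)$ meets the boundary of the ``curved-backed trapezoid'' $\trap(\xi)$; this requires the internal-tangency dichotomy of Lemmas~\ref{lemmainttang} and~\ref{lemmangszrnwnKztlS} and the comet construction behind Lemma~\ref{lemmaperspCs}, and edge-freeness is used there to guarantee unique support points. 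Your proof dualizes instead: inclusion of nonempty compact convex sets is equivalent to the pointwise inequality of support functions, $h$ of a convex hull of a union is the pointwise maximum, failures of the target inequality are confined to the relative interior of the normal cone of $A_j$ (this is the one place where the hypothesis $\hal U_0\cup\hal U_1\subseteq\hsz$ enters), and in each of the cases (a)--(c) the difference $\delta=h_{\hal U_1}-h_{\hal U_0}$ is, up to a nonzero scalar, either $\langle v,\cdot\rangle$ or $h_{\hal U_0-P}$, whose zero set on $\Cunit$ consists of exactly two directions --- the outer normals of the two supporting lines of $\hal U_0$ through the exterior point $P$, which number exactly two because the cone generated by $\hal U_0-P$ is pointed by strong separation. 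The pigeonhole of two separating directions against three pairwise disjoint open normal cones then finishes, and the residual subcases ($\lambda=1$, $v=0$, $P\in\hal U_0$, the singleton inside the other body, collinear $A_i$) are all trivial containments. I checked the two delicate points --- the two-element zero set and the localization of failures --- and both are sound. What your approach buys is substantial: it is far shorter, it dispenses with the comet and tangency machinery, and, as you correctly note, it never uses hypothesis (d); so after the reduction \eqref{eqtxtmndgknvx} to convex compact sets it proves Theorem~\ref{thmmain} in full and renders the rational-disk approximation of Section~\ref{sectiongettingrid} unnecessary. What the paper's route buys is a self-contained synthetic planar argument that does not presuppose the support-function calculus.
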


The proof of this lemma needs some preparation and auxiliary lemmas. In the rest of this section, we always assume that $\,\hal U_0$ and $\,\hal U_1$ are \emph{edge-free}.

\subsection{Comets} In this paper, 
the Euclidean distance $((P_x-Q_x)^2+(P_y-Q_y)^2)^{1/2}$ of $P,Q\in\preal$ is denoted by $\dist P Q$. For nonempty \emph{compact} sets $\,\hal U,\hal V\subset \preal$, 
$\dist {\hal U}{\hal V}=\inf\set{\dist P Q: P\in \hal U,\,\, Q\in \hal V}
=\min\set{\dist P Q: P\in \hal U,\,\, Q\in \hal V}$. For an edge-free compact convex set $\,\hal U$ with more than one elements and a point $F\in \preal\setminus \hal U$, we define the \emph{comet} $\comet F {\hal U}$ with \emph{focus} $F$ and \emph{nucleus} $\,\hal U$ so that
\begin{figure}[htb] 
\centerline
{\includegraphics[scale=1.1]{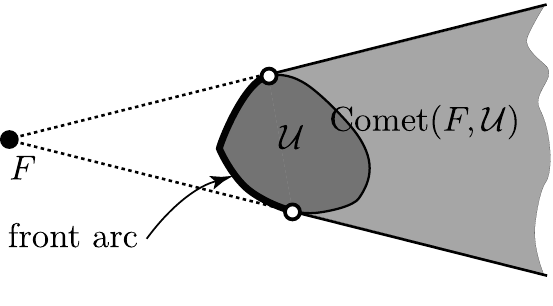}}
\caption{A comet
\label{figegy}}
\end{figure}%
\begin{equation}
\comet F {\hal U}\text{ is the  grey-filled area in Figure~\ref{figegy}.}
\label{eqcrlDsKc}
\end{equation}
More precisely, if we consider $F$ as a source of light, then  $\comet F {\hal U}$ is the topological closure of the set of points that are shadowed by the nucleus $\,\hal U$. Note that 
$\hal U$, which is dark-grey in the figure, is a subset of $\comet F {\hal U}$ and we have that $\dist {\set F}{\comet F {\hal U}}>0$.  
As opposed to $\,\hal U$,  $\comet F {\hal U}$ is never compact.

Since $\,\hal U$ is compact, convex, and not a singleton, there are exactly two supporting lines of $\,\hal U$ through $F$, and they are supporting lines of $\comet F {\hal U}$ as well. Since $\,\hal U$ is edge-free, these two lines are \emph{tangent lines} of $\,\hal U$ and also of $\comet F {\hal U}$.
Each of these tangent lines has a  unique tangent point on $\partial \hal U$. The arc of $\partial \hal U$ between these points that is closer to $F$ is the \emph{front arc} of the comet; see the thick curve in Figure~\ref{figegy}.
Note that the boundary of $\comet F {\hal U}$ is the union of the front arc and two half-lines, so comets are never edge-free.

\subsection{Externally perspective compact convex sets}
For topologically closed convex sets $\hal V_1,\hal V_2\subseteq \preal$, we will say that 
\begin{equation}
\text{$\hal V_1$ is \emph{loosely included} in $\hal V_2$, in notation, $\hal V_1\loosin \hal V_2$,}
\label{eqlooSinCl}
\end{equation}
if every point of $\hal V_1$ is an internal point of $\hal V_2$. 
The \emph{interior} of a compact convex set $\,\hal U$ will be denoted by $\inter {\hal U}$; note that $\inter {\hal U}=\hal U\setminus\partial \hal U$. 
Clearly, if $\hal V_1\subset \preal$ is compact, $\hal V_2\subseteq \preal$ is closed, and $P\in\inter {\hal V_1}$, then  
\begin{equation}
\left\{\pbxskip
\parbox{8.4cm}{$\hal V_1\loosin \hal V_2$ implies that there is a $\delta>0$ such that $\hthet P{1+\epsilon}(\hal V_1)\loosin \hal V_2$ for all positive $\epsilon\leq \delta$,}\right.
\label{eqpbxTlSnHfgjQwpLnt}
\end{equation}
because  $\preal\setminus \inter {\hal V_2}$  is closed and its distance from $\hal V_1$ is positive.

Next, for  compact convex sets $\,\hal U_1,\hal U_2\subset \preal$, each of them with more than one element, we say that  $\,\hal U_1$ and $\,\hal U_2$ are \emph{externally perspective} if 
$\hal U_2=\hthet P\lambda(\hal U_1)$ for some (in fact, unique) $0<\lambda\in \real\setminus\set 1$ and $P\in \preal\setminus\rhullo2(\hal U_1\cup \hal U_2)$; see \eqref{eqtxthmThdF}. Equivalently, $\,\hal U_1$ and $\,\hal U_2$ are \emph{externally perspective}
if $\,\hal U_2=\hthet P\lambda(\hal U_1)$ 
with $P\notin \hal U_1$ and $0<\lambda\neq 1$. Hence, by interchanging the subscripts if necessary, we will often assume that $0<\lambda <1$ if $\,\hal U_2=\hthet P\lambda(\hal U_1)$ is externally perspective to $\,\hal U_1$.

\begin{figure}[ht] 
\centerline
{\includegraphics[scale=1.1]{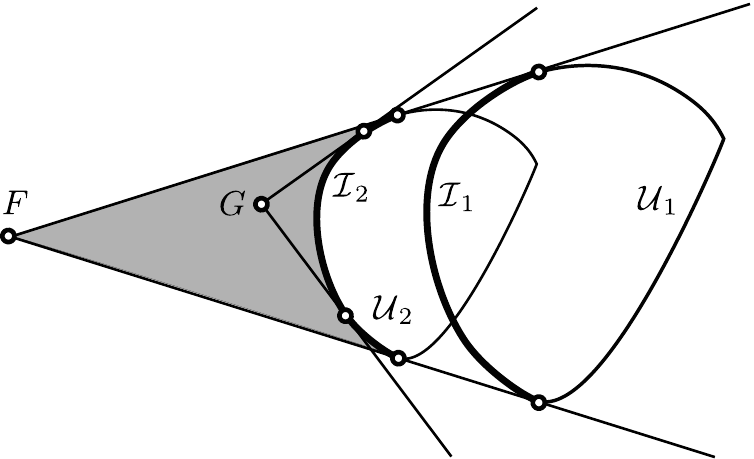}}
\caption{Illustration for Lemma~\ref{lemmaperspCs}
\label{figmbAT}}
\end{figure}%

The following lemma is obvious by Figure~\ref{figmbAT}.

\begin{lemma}\label{lemmaperspCs}
Let $\,\hal U_1$ and $\,\hal U_2=\hthet{F}\lambda(\hal U_1)$ be externally perspective compact convex subsets of the plane
such that  $0<\lambda<1$.  
If $G$ is an internal point of the grey-filled area surrounded by the common tangent lines of $\,\hal U_1$ and $\,\hal U_2$ through $F$ and the front arc $\hal I_2$ of $\comet F{\hal U_2}$, then   $\comet F{\hal U_1}$ is loosely included in $\comet G{\hal U_2}$.
\end{lemma}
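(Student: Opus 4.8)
The plan is to recast everything in terms of support functions, where comets have a transparent description, and then to reduce the loose inclusion to a family of strict linear inequalities. First I would translate $F$ to the origin (this changes nothing), so that $\hal U_2=\hthet F\lambda(\hal U_1)$ becomes $\hal U_2=\lambda\hal U_1$ with $0<\lambda<1$, and so that every comet occurring here is parametrized as $\comet F{\hal U}=\{F+t(P-F):P\in\hal U,\ t\ge 1\}=\{tP:P\in\hal U,\ t\ge 1\}$; a one-line convexity check on this parametrization shows that each comet is a closed convex set. For a direction $n\in\Cunit$ I write $h_{\hal U}(n)=\max_{P\in\hal U}n\cdot P$, so that $h_{\hal U_2}(n)=\lambda h_{\hal U_1}(n)$. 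Maximizing $n\cdot X$ over $X=G+t(Q-G)$ with $Q\in\hal U_2$ and $t\ge 1$, one finds that $n$ is the outer normal of a supporting line of $\comet G{\hal U_2}$ exactly when $n\cdot G\ge h_{\hal U_2}(n)$, and that the supporting value is then $h_{\hal U_2}(n)$. Hence $X\in\inter{\comet G{\hal U_2}}$ will follow once I verify $n\cdot X<\lambda h_{\hal U_1}(n)$ for every such admissible normal $n$.

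The heart of the proof is then a short case analysis of the admissible normals. Take an arbitrary $X\in\comet F{\hal U_1}$ and write $X=sP$ with $P\in\hal U_1$ and $s\ge 1$. I claim that every normal $n$ with $n\cdot G\ge\lambda h_{\hal U_1}(n)$ must satisfy $h_{\hal U_1}(n)<0$. The two tangent normals, for which $h_{\hal U_1}(n)=0$, are excluded because $G$ lies strictly inside the cone bounded by the common tangent lines through $F$, so that $n\cdot G<0=\lambda h_{\hal U_1}(n)$ for those $n$. The back normals, for which $h_{\hal U_1}(n)>0$, are excluded because $G$ is an internal point of the grey region and so lies strictly in front of $\hal U_2$: the ray from $F$ through $G$ meets $\hal U_2$ at some parameter $t_0>1$, whence $t_0G\in\hal U_2$ gives $n\cdot G=t_0^{-1}\,n\cdot(t_0G)\le t_0^{-1}h_{\hal U_2}(n)<h_{\hal U_2}(n)$, contradicting admissibility. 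For the surviving front normals one has $h_{\hal U_1}(n)<0$, and then $n\cdot X=s\,(n\cdot P)\le s\,h_{\hal U_1}(n)\le h_{\hal U_1}(n)<\lambda h_{\hal U_1}(n)=h_{\hal U_2}(n)$, where the final strict inequality uses $0<\lambda<1$ together with $h_{\hal U_1}(n)<0$. This is precisely the strict inequality required.

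Finally I would upgrade these pointwise strict inequalities to genuine loose inclusion. The set of admissible normals $\{n\in\Cunit:n\cdot G\ge\lambda h_{\hal U_1}(n)\}$ is closed in $\Cunit$, hence compact, and the continuous function $n\mapsto\lambda h_{\hal U_1}(n)-n\cdot X$ is strictly positive on it by the previous paragraph; thus it is bounded below by some $\epsilon>0$. Any point $Y$ with $\dist X Y\le\epsilon$ then satisfies $n\cdot Y\le n\cdot X+\epsilon\le\lambda h_{\hal U_1}(n)=h_{\hal U_2}(n)$ for every admissible $n$, so the closed disk of radius $\epsilon$ about $X$ lies in $\comet G{\hal U_2}$, and $X$ is an internal point of $\comet G{\hal U_2}$. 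As $X\in\comet F{\hal U_1}$ was arbitrary, this yields $\comet F{\hal U_1}\loosin\comet G{\hal U_2}$, as desired.

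I expect the genuinely delicate step to be the exclusion of the back normals, that is, the proof that an internal point $G$ of the grey region lies strictly on the inner side of \emph{every} back supporting line of $\hal U_2$. The tempting shortcut is to invoke the support function of the grey region itself, but that region is \emph{not} convex, since its far boundary is the inward-bulging front arc $\hal I_2$; so one must instead use separately that $G$ is strictly inside the cone and strictly outside the comet $\comet F{\hal U_2}$, exactly as in the ray argument above. Everything else is routine bookkeeping with the support-function inequalities, and the hypotheses $0<\lambda<1$ and $F\notin\hal U_1$ (external perspectivity) enter only to guarantee strictness and the existence of the two tangent normals.
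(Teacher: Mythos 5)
Your argument is correct, but it is worth knowing that the paper offers no proof of this lemma at all: its entire justification is the single sentence that the statement is ``obvious by Figure~\ref{figmbAT}''. So your support-function computation is not a variant of the paper's argument but a rigorous replacement for a missing one. The essential content --- that after placing $F$ at the origin, every admissible outer normal $n$ of $\comet G{\hal U_2}$ (those with $n\cdot G\ge h_{\hal U_2}(n)$, for which the supporting value is $h_{\hal U_2}(n)=\lambda h_{\hal U_1}(n)$) must be a ``front'' normal with $h_{\hal U_1}(n)<0$, because the tangent normals are killed by $G$ lying strictly inside the cone and the back normals by the ray from $F$ through $G$ first meeting $\,\hal U_2$ at a parameter $t_0>1$ --- is exactly the rigorous form of what the figure is meant to convey, and the chain $n\cdot X\le s\,h_{\hal U_1}(n)\le h_{\hal U_1}(n)<\lambda h_{\hal U_1}(n)$ is where both hypotheses $s\ge 1$ and $0<\lambda<1$ do their work. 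The concluding compactness step is genuinely needed (loose inclusion demands interiority, not mere containment) and is correctly done, since $\comet G{\hal U_2}$ is a closed convex proper subset of $\preal$ and hence the intersection of its supporting half-planes, all of which have admissible normals. Two marginal remarks: your proof nowhere uses the standing edge-freeness assumption of the section, so it is slightly more general than what the paper needs; and you might add one line observing that the set of admissible normals is nonempty (for instance because $G\notin\comet G{\hal U_2}$), so the positive lower bound $\epsilon$ in the last step is an infimum over a nonempty compact set.
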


In the rest of the paper, to ease the notation, 
\begin{equation}
\text{$\hsz$ will stand for $\rhullo 2(\set{A_0,A_1,A_2})$.}
\label{eqpbxZhFtT}
\end{equation}
Next, as a ``loose counterpart'' of the 2-Carousel Rule defined in  Adar\-icheva~\cite{adaricarousel}, we formulate the following lemma.

\begin{lemma}\label{lemmafpnTlsl}
Let $A_0$, $A_1$, and $A_2$ be non-collinear points in the plane. If  $B_0$ and $B_1$ are \emph{distinct internal} points of $\hsz$, then there exist $j\in\set{0,1,2}$ and $k\in\set{0,1}$ such~that 
\[\set{B_{1-k}}\loosin \rhullo 2\bigl(\set{B_k}\cup (\set{A_0,A_1,A_2}\setminus\set{A_j})\bigr).
\]
\end{lemma}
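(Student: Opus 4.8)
The plan is to unwind the definition of $\loosin$ and then argue by subdividing the triangle $\hsz$ with cevians. Since the left-hand set in the desired inclusion is a singleton, by the definition of $\loosin$ the relation $\set{B_{1-k}}\loosin \rhullo 2\bigl(\set{B_k}\cup(\set{A_0,A_1,A_2}\setminus\set{A_j})\bigr)$ holds exactly when $B_{1-k}$ is an internal point of the triangle spanned by $B_k$ and the two vertices $A_i,A_{i'}$ with $\set{i,i'}=\set{0,1,2}\setminus\set j$. First I would record that this triangle is genuinely two-dimensional: as $B_k$ is an internal point of $\hsz$, it does not lie on the line through $A_i$ and $A_{i'}$, so $B_k,A_i,A_{i'}$ are non-collinear and $\rhullo 2\bigl(\set{B_k}\cup\set{A_i,A_{i'}}\bigr)$ has nonempty interior. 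This makes the loose inclusion meaningful.

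Next I would use the standard cevian subdivision. Fixing an internal point $B$ of $\hsz$, the three segments $BA_0$, $BA_1$, $BA_2$ cut $\hsz$ into the three triangles $\rhullo 2(\set{B,A_1,A_2})$, $\rhullo 2(\set{B,A_0,A_2})$, $\rhullo 2(\set{B,A_0,A_1})$, whose interiors are pairwise disjoint and whose union is $\hsz$. Consequently, an internal point $X$ of $\hsz$ fails to be an internal point of every one of these three triangles precisely when $X$ lies on one of the three open cevians, that is, when $X$ is strictly between $B$ and $A_i$ for some $i$; indeed, $X$ cannot lie on the edge $A_iA_{i'}$, since $X$ is internal to $\hsz$.

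With this observation I would first try $k=0$, applying the subdivision with $B=B_0$ and $X=B_1$. If $B_1$ is an internal point of one of the three subtriangles, then the corresponding $j$ together with $k=0$ finishes the proof. The only remaining case is that $B_1$ lies strictly between $B_0$ and $A_i$ for some $i$, and here I would switch to $k=1$, applying the subdivision with $B=B_1$ and $X=B_0$; the claim is that now $B_0$ is an internal point of one of the three subtriangles determined by $B_1$.

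The main obstacle, and the only genuinely delicate point, is verifying this claim, i.e. ruling out that the switch reproduces the same boundary pathology. Suppose for contradiction that $B_0$ lies strictly between $B_1$ and $A_{i'}$ for some $i'$. Then $B_0,B_1,A_{i'}$ are collinear, while we are in the case $B_0,B_1,A_i$ collinear as well. If $i'=i$, the two betweenness relations are incompatible, because $B_1$ strictly between $B_0$ and $A_i$ forces $B_0$ \emph{not} to lie between $B_1$ and $A_i$. If $i'\neq i$, then the line through the distinct points $B_0,B_1$ would contain both distinct vertices $A_i$ and $A_{i'}$, hence would be the edge $A_iA_{i'}$ of $\hsz$; this is impossible, since that edge contains no internal point of $\hsz$, whereas $B_0$ and $B_1$ are internal. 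This contradiction shows that $B_0$ avoids all three cevians from $B_1$, so it is an internal point of one subtriangle, and the corresponding $j$ with $k=1$ completes the proof.
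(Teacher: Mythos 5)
Your proof is correct and follows essentially the same route as the paper: subdivide $\hsz$ by the three cevians from $B_0$, take $k=0$ if $B_1$ is interior to one of the three subtriangles, and otherwise (when $B_1$ lies on an open cevian $[A_j,B_0]$) switch to $k=1$. You merely spell out the detail that the paper declares "clear" — namely that in the second case $B_0$ is interior to a subtriangle determined by $B_1$ — and your verification of that point (incompatible betweenness if $j'=j$, and the line $B_0B_1$ would contain two vertices if $j'\neq j$) is sound.
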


\begin{figure}[ht] 
\centerline
{\includegraphics[scale=1.1]{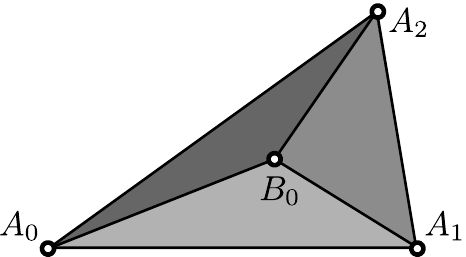}}
\caption{Illustration for the proof of Lemma~\ref{lemmafpnTlsl}
\label{figtcarou}}
\end{figure}%

\begin{proof} If $B_1$ is in the interior of one of the three little triangles that are colored with different shades of grey
in Figure~\ref{figtcarou}, then we can let $k:=0$. Otherwise, $B_1$ is an internal point of  one of the line segments $[A_0,B_0]$, $[A_1,B_0]$, and $[A_2,B_0]$, and we can let $k:=1$. In both cases, it is clear that we can choose an appropriate $j\in\set{0,1,2}$.
\end{proof}

\begin{lemma}\label{lemmasinglton}  
Condition \textup{\ref{thmmain}}\eqref{thmmainc}, even without assuming \textup{\ref{lemmaEFmain}}\eqref{thmmaind},  implies \eqref{eqpbxThmlnygrSmzWrB}, that is,  the conclusion of Lemma~\textup{\ref{lemmaEFmain}}.
\end{lemma}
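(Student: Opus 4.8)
The plan is to reduce implication~\eqref{eqpbxThmlnygrSmzWrB} to a single ray\-shooting observation, and to exploit the freedom in the choice of $k$ so that the singleton always plays the role of $\hal U_{1-k}$ rather than $\hal U_k$. First I would fix $s\in\set{0,1}$ with $\hal U_s=\set B$ a singleton, which exists by \ref{thmmain}\eqref{thmmainc}, and set $k:=1-s$, so that $\hal U_{1-k}=\hal U_s=\set B$ and $\hal U_k=\hal U_{1-s}$. With this choice the conclusion of \eqref{eqpbxThmlnygrSmzWrB} amounts to exhibiting a $j\in\set{0,1,2}$ with
\[
B\in\rhullo 2\bigl(\hal U_{1-s}\cup(\set{A_0,A_1,A_2}\setminus\set{A_j})\bigr).
\]
If $\hal U_{1-s}=\emptyset$, I would instead pick $k:=s$; then $\hal U_{1-k}=\hal U_{1-s}=\emptyset$ is included in everything and \eqref{eqpbxThmlnygrSmzWrB} holds trivially. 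So I may assume $\hal U_{1-s}\neq\emptyset$ and choose a witness point $C\in\hal U_{1-s}$. By the hypothesis of \eqref{eqpbxThmlnygrSmzWrB}, both $B$ and $C$ lie in $\hsz$.

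Next I would dispose of the degenerate configuration in which $A_0,A_1,A_2$ are collinear. Then $\hsz$ is a (possibly degenerate) segment $[A_p,A_q]$, where $A_p$ and $A_q$ are two of the three points whose convex hull is already all of $\hsz$; dropping the remaining index $j$ gives
\[
B\in\hsz=\rhullo 2(\set{A_p,A_q})\subseteq\rhullo 2\bigl(\hal U_{1-s}\cup\set{A_p,A_q}\bigr),
\]
as required.

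For the main case I would assume $A_0,A_1,A_2$ are non\-collinear, so that $\hsz$ is a compact convex region with nonempty interior. If $B=C$, then $B\in\hal U_{1-s}$ and any $j$ works; so assume $B\neq C$ and consider the ray emanating from $C$ and passing through $B$. Since $C\in\hsz$ and $\hsz$ is compact and convex, the intersection of this ray with $\hsz$ is the segment $[C,B']$, where $B'\in\partial\hsz$ is the exit point. The boundary of the triangle is the union of its three edges, so $B'$ lies on some edge $[A_i,A_{i'}]$ with $\set{i,i'}=\set{0,1,2}\setminus\set j$; and since $B\in[C,B']$, I would conclude
\[
B\in\rhullo 2(\set{C,A_i,A_{i'}})\subseteq\rhullo 2\bigl(\hal U_{1-s}\cup(\set{A_0,A_1,A_2}\setminus\set{A_j})\bigr),
\]
which is exactly what the choice $k=1-s$ demands. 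Note that this argument refers to $\hal U_{1-s}$ only through the single point $C$, and in particular never uses the shape of its boundary; this is precisely why the lemma may drop assumption \ref{lemmaEFmain}\eqref{thmmaind}.

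The only genuinely delicate point, and the step I would write with most care, is the planar-convexity fact underlying the ray argument: that the ray from $C$ through $B$ leaves $\hsz$ through a well-determined edge with $B$ preceding the exit point $B'$, equivalently that the three sub-triangles $\rhullo 2(\set{C,A_i,A_{i'}})$ spanned by $C$ and the pairs of vertices cover $\hsz$. I expect the rest to be routine, the main nuisance being the boundary subcases where $C$ is itself a vertex or lies on an edge, in which one or two of these three sub-triangles degenerate to segments; one must check that their union still equals $\hsz$, which it does.
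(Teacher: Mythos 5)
Your proof is correct, and it takes a genuinely different and more direct route than the paper's. The paper fixes the singleton $\hal U_0=\set{B_0}$, reduces to $B_0\in\inter{\hsz}$, and then argues by cases on $k$: either some $B_1\in\hal U_1$ and some $j$ satisfy \eqref{eqZhbRtPfjWjMtDnMh}, so that $k=1$ works, or else it uses Lemma~\ref{lemmafpnTlsl} to assign to each $B_1\in\hal U_1$ an index $j(B_1)$ as in \eqref{eqpbxGzTnMnL} and exploits the convexity of $\,\hal U_1$ to show that this index is constant, so that $k=0$ works. You instead fix the orientation once and for all so that the singleton is always the covered set, and settle everything with one ray-shooting observation: the ray from any $C\in\hal U_{1-s}$ through $B$ exits $\hsz$ at a point $B'$ of some edge $[A_i,A_{i'}]$, whence $B\in[C,B']\subseteq\rhullo 2(\set{C,A_i,A_{i'}})$. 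This removes the case split and the constancy-of-$j$ step, uses only a single point of the non-singleton set (so neither its convexity nor the reduction \eqref{eqtxtmndgknvx} is needed), and proves slightly more than stated, namely that $k$ can always be chosen with $\hal U_{1-k}$ the singleton; it also shows that the paper's second case is vacuous once $\hal U_1\neq\emptyset$. What the paper's route buys is mainly economy of tools, since Lemma~\ref{lemmafpnTlsl} is needed later anyway and your covering fact is essentially its non-loose variant with a prescribed orientation. Your treatment of the degenerate configurations (empty set, collinear vertices, $B=C$, exit at a vertex) is adequate; the one caveat, which applies to the paper's formulation just as much as to yours, is that $\set{A_0,A_1,A_2}\setminus\set{A_j}$ must be understood as deleting the labelled point $A_j$ in the degenerate situation where two of the $A_i$ coincide.
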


\begin{proof}  
Since $\,\hal U_0$ and $\,\hal U_1$ play symmetric roles, we can assume that $\,\hal U_0=\set{B_0}$ is a singleton. 
We can assume also that $B_0\in\inter{\hsz}$, because otherwise the statement is trivial. 
If there exists a point $B_1\in \hal U_1$ and a subscript $j\in\set{0,1,2}$ such that 
\begin{equation}
B_0\in \rhullo 2\bigl(\set{B_1}\cup (\set{A_0,A_1,A_2}\setminus\set{A_j})\bigr),
\label{eqZhbRtPfjWjMtDnMh}
\end{equation} 
then \eqref{eqpbxThmlnygrSmzWrB} holds with $k=1$ and this $j$. 
So, we assume that \eqref{eqZhbRtPfjWjMtDnMh} fails for all $B_1\in\hal U_1$ and all $j\in\set{0,1,2}$. Then, by Lemma~\ref{lemmafpnTlsl} if $B_1$ below is in $\inter{\hsz}$ or trivially if $B_1\in\partial\hsz$,  
\begin{equation}
\left\{\pbxskip
\parbox{10.7cm}{
for each $B_1\in \hal U_1$, there is a smallest $j=j(B_1)\in\set{0,1,2}$
such that 
$B_1\in \rhullo 2\bigl(\set{B_0}\cup (\set{A_0,A_1,A_2}\setminus\set{A_{j(B_1)}})\bigr)$.}\right.
\label{eqpbxGzTnMnL}
\end{equation} 
If $j=j(B_1)$ does not depend on $B_1\in\hal U_1$, then \eqref{eqpbxGzTnMnL} gives the satisfaction of \eqref{eqpbxThmlnygrSmzWrB} with $k=0$ and this $j$. For the sake of contradiction, suppose that $j(B_1)$ depends on $B_1\in \hal U_1$. By  \eqref{eqpbxGzTnMnL}, this means that there are points $B_1'$ and $B''_1$ in $\,\hal U_1$ that belong to distinct little triangles (colored by different shades of grey) in Figure~\ref{figtcarou}. By convexity, $[B_1',B_1'']\subseteq \hal U_1$. Hence, $\,\hal U_1$ has a point $B_1$ that belongs to one of the line segments $[A_0,B_0]$, $[A_1,B_0]$, and $[A_2,B_0]$. This $B_1$ shows the validity of \eqref{eqZhbRtPfjWjMtDnMh} for some $j$, which contradicts our assumption that  \eqref{eqZhbRtPfjWjMtDnMh} fails for all $j$. Thus, $j(B_1)$ does not depend on $B_1\in \hal U_1$, completing the proof. 
\end{proof}

\subsection{Internally tangent edge-free compact convex sets}
We say that $\,\hal U_0$ and $\,\hal U_1$ subject to \ref{thmmain}\eqref{thmmaina} or \ref{thmmain}\eqref{thmmainb} are \emph{internally tangent} if they have a common pointed supporting line. For example, as it is shown in Figure~\ref{figinttang}, if 
\begin{figure}[ht] 
\centerline
{\includegraphics[scale=1.1]{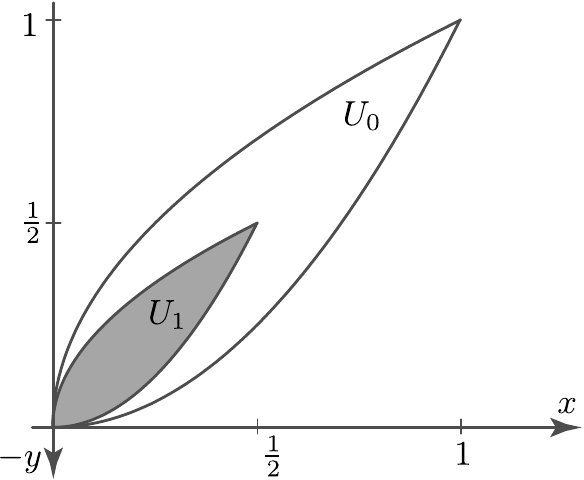}}
\caption{Two internally tangent edge-free compact convex sets
\label{figinttang}}
\end{figure}%
\begin{align}
\hal U_0 &:=\set{\pair x y: 0\leq x\leq 1,\,\,\, x^2\leq y\leq 1-(x-1)^2}\text{ and }\cr
\hal U_1 &:=\hthet{\pair 0 0}{1/2}(\hal U_0),
\label{eqpRbLhmtntbb}
\end{align}
then $\,\hal U_0$ and $\,\hal U_1$ are internally tangent edge-free compact convex sets. Let $O=\pair 0 0$.  Denoting the  abscissa axis with the usual orientation $\pair 1 0\in \Cunit$ and the ordinate axis with the unusual reverse orientation $\pair0{-1}\in\Cunit$ by $x$ and $-y$, respectively, both $\pair {O}x$ and $\pair O{-y}$ are common pointed supporting lines of $\,\hal U_0$ and $\,\hal U_1$. This shows that 
condition \ref{thmmain}\eqref{thmmaina} together with 
\ref{lemmaEFmain}\eqref{thmmaind}
 do not imply the uniqueness of the common supporting lines through a point of $\partial \hal U_0\cap\partial \hal U_1$ if $\,\hal U_0$ and $\,\hal U_1$ are internally tangent. In case of \eqref{eqpRbLhmtntbb} and similar cases, these pointed supporting lines have the same support point and $\,\hal U_0$ and $\,\hal U_1$ are tangent to each other in some sense.
The aim of this subsection is to prove the following lemma.

\begin{lemma}\label{lemmainttang} If $\,\hal U_0$ and $\,\hal U_1$ are non-singleton, internally tangent, edge-free compact convex subsets of $\,\preal$, then the following two assertions hold.
\begin{enumerate}[\upshape\quad(i)]
\item\label{lemmainttanga} If $\,\hal U_1=\hthet P\lambda(\hal U_0)$ for some $0<\lambda \in\real$ and $P\in \preal$, as in \textup{\ref{thmmain}\eqref{thmmaina}}, then either $\,\hal U_1=\hal U_0$ and $\lambda=1$, or  $\lambda\neq 1$ and  $\partial \hal U_0\cap \partial \hal U_1=\set{P}$.  Furthermore, if $\lambda>1$ then $\,\hal U_1\supseteq \hal U_0$ while $0<\lambda<1$ implies that $\,\hal U_1\subseteq \hal U_0$.
\item\label{lemmainttangb} If $\,\hal U_1$ is obtained from $\,\hal U_0$ by a translation as in \textup{\ref{thmmain}\eqref{thmmainb}}, then $\,\hal U_1=\hal U_0$ and the translation in question is the identity map.
\end{enumerate}
\end{lemma}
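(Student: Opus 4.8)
The plan is to reduce both parts of Lemma~\ref{lemmainttang} to one structural fact about edge-free compact convex sets: for every direction $d\in\Cunit$ there is exactly one directed supporting line with direction $d$ (oriented so that the set is on its left, as in \eqref{eqpbxSpRtlFt}), and by edge-freeness this line meets the set in a single point. This is the standard description of directed supporting lines recorded in Cz\'edli and Stach\'o~\cite{czgstacho}. I would first observe that the hypothesis ``internally tangent'' gives a pointed supporting line $\pair Q\ell$ common to $\hal U_0$ and $\hal U_1$; since both sets are edge-free, $Q$ is the only point of $\ell\cap\hal U_0$ and also the only point of $\ell\cap\hal U_1$, so $Q\in\partial\hal U_0\cap\partial\hal U_1$. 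Everything else is transport of this pointed supporting line under the given transformation.

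For part \eqref{lemmainttanga} I would first dispose of $\lambda=1$, where $\hthet P1$ is the identity and $\hal U_1=\hal U_0$, and then assume $\lambda\neq1$. A positive homothety preserves directions and maps left halfplanes to left halfplanes, so $\hthet P\lambda(\ell)$ is again a directed supporting line of $\hal U_1$ in direction $\dir\ell$; by the uniqueness above, $\hthet P\lambda(\ell)=\ell$. Since a homothety with $\lambda\neq1$ fixes a line only when its centre lies on that line, this forces $P\in\ell$. Tracking the support point, $\hthet P\lambda(Q)$ is the support point of $\hal U_1$ on $\hthet P\lambda(\ell)=\ell$, which is $Q$; as the only fixed point of a non-identity homothety is its centre, I get $Q=P$, hence $P=Q\in\partial\hal U_0\cap\partial\hal U_1$. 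The two containments are then immediate from convexity: since $P\in\hal U_0$, for $0<\lambda<1$ every $\hthet P\lambda(X)=(1-\lambda)P+\lambda X$ is a convex combination of two points of $\hal U_0$, so $\hal U_1\subseteq\hal U_0$; the case $\lambda>1$ follows by applying this to $\hal U_0=\hthet P{1/\lambda}(\hal U_1)$.

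It remains to prove $\partial\hal U_0\cap\partial\hal U_1=\set P$, and this is the step I expect to be the crux. Treating $0<\lambda<1$ (the case $\lambda>1$ being symmetric with the roles of $\hal U_0$ and $\hal U_1$ interchanged), I would argue by contradiction: suppose $R\in\partial\hal U_0\cap\partial\hal U_1$ with $R\neq P$. From $R\in\hal U_1=\hthet P\lambda(\hal U_0)$ one gets $S:=\hthet P{1/\lambda}(R)\in\hal U_0$, and since $0<\lambda<1$ and $R\notin\set{P,S}$, the point $R=(1-\lambda)P+\lambda S$ lies in the relative interior of the chord $[P,S]\subseteq\hal U_0$. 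A supporting line $m$ of $\hal U_0$ at the boundary point $R$ must then contain both endpoints $P$ and $S$ (by the usual functional argument, since $R$ is a convex combination lying on $m$ while $P,S$ are on the closed left side), so $m\cap\hal U_0$ contains the two distinct points $P$ and $R$, contradicting that $\hal U_0$ is edge-free. The difficulty here, unlike the earlier purely transport-theoretic steps, is precisely to convert ``a second common boundary point'' into a genuine chord through $P$ and then invoke edge-freeness (equivalently, Lemma~\ref{lemmaNoThree}).

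For part \eqref{lemmainttangb}, let $\tau\colon X\mapsto X+v$ be the translation with $\hal U_1=\tau(\hal U_0)$. The same transport argument applies: $\tau$ preserves directions, so $\tau(\ell)=\ell+v$ is the directed supporting line of $\hal U_1$ in direction $\dir\ell$, and uniqueness forces $\ell+v=\ell$, whence $v$ is parallel to $\ell$ and $\tau(Q)=Q+v\in\ell$. But $\tau(Q)$ is the support point of $\hal U_1$ on $\ell$, which is $Q$, so $Q+v=Q$, giving $v=\pair00$. Thus $\tau$ is the identity and $\hal U_1=\hal U_0$, as claimed.
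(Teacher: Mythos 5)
Your proposal is correct and follows essentially the same route as the paper: transport the common pointed supporting line under the transformation, use the uniqueness of the directed supporting line of a given direction \eqref{eqpbxZhgTnQspL} together with edge-freeness to identify the image line and its support point with the original ones, conclude that the support point is a fixed point (hence the homothetic centre, or that the translation vector vanishes), and get the inclusions from convexity. The only cosmetic difference is in the step $\partial \hal U_0\cap\partial\hal U_1=\set{P}$, where the paper exhibits three collinear points of $\partial\hal U_1$ and cites Lemma~\ref{lemmaNoThree}, while you exhibit a supporting line of $\,\hal U_0$ carrying two distinct support points; both are immediate applications of edge-freeness.
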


Note that this lemma fails without assuming that $\,\hal U_0$ and $\,\hal U_1$ are edge-free. To exemplify this, let $\,\hal U_0$ be the rectangle
$\set{\pair x y: -2\leq x \leq 2\text{ and } 0\leq y\leq 2}$. 
Then $\,\hal U_1:=\hthet{\pair 4 2}{1/2}(\hal U_0)$ and $\,\hal U_1:=\set{\pair{x+1}y: \pair x y\in \hal U_0}$ would witness the failure of 
 \ref{lemmainttang}\eqref{lemmainttanga} and that of  \ref{lemmainttang}\eqref{lemmainttangb}, respectively.

\begin{proof}  Let $\pair {P^\ast}{\ell^\ast}$ be a common pointed supporting line of $\,\hal U_0$ and $\,\hal U_1$.
  
First, assume that $\,\hal U_1=\hthet P\lambda(\hal U_0)$ as in \eqref{lemmainttanga}. We can assume that $\,\hal U_0\neq \hal U_1$ since otherwise the lemma is trivial.
So we know that $0<\lambda\neq 1$. Since $\pair {P^\ast}{\ell^\ast}$ is a pointed supporting line of $\,\hal U_0$, so is $\pair{P'}{\ell'}:=\pair{\hthet P\lambda(P^\ast)}{\hthet P\lambda(\ell^\ast)}$ of $\,\hal U_1=\hthet P\lambda(\hal U_0)$. We have that $\dir{\ell'}=\dir{\ell^\ast}$; note that this is one of the reasons that $\lambda>0$ is always assumed in this paper. It is well known from the folklore that 
for each $\alpha\in\Cunit$ and every compact convex set $\,\hal U$, 
\begin{equation}
\text{$\hal U$ has exactly one directed supporting line of  direction $\alpha$;}
\label{eqpbxZhgTnQspL}
\end{equation}
see Bonnesen and Fenchel~\cite{bonfench},  Yaglom and Boltyanski\v\i~\cite[page 8]{yagbolt}, or Cz\'edli and Stach\'o~\cite{czgstacho}. Hence, $\ell'$ and $\ell^\ast$ are the same supporting lines of $\,\hal U_1$. Since $\,\hal U_1$ is edge-free,
$\ell^\ast=\ell'$ has only one support point, whence $P^\ast=P'$. So $P^\ast=P'=\hthet P\lambda(P^\ast)$. Since $\lambda\neq 1$, the homothety $\hthet P\lambda$ has only one fixed point, whereby $P^\ast=P$, as required.
Next, let $Q$ be an arbitrary element of $\partial \hal U_0\cap\partial \hal U_1$. For the sake of contradiction, suppose that $Q\neq P$. Since $\lambda\neq 1$, the \emph{collinear} points $P$, $Q$, and $Q':=\hthet P\lambda(Q)$ are pairwise distinct. Since the $\hthet P\lambda$-image of a boundary point is a boundary point, these three collinear points belong to $\partial \hal U_1$. This contradicts Lemma~\ref{lemmaNoThree} and  settles the first sentence of \eqref{lemmainttanga}.

Since $\hthet P{1/\lambda}$ is the inverse of $\hthet P\lambda$, it suffices to prove the second sentence of 
 \eqref{lemmainttanga} only for $\lambda>1$, because then the case $0<\lambda<1$ will follow by replacing $\tuple{\hal U_0, \hal U_1,\lambda}$ by $\tuple{\hal U_1, \hal U_0,1/\lambda}$. So, let $X$ be an arbitrary point of $\,\hal U_0$. Using that $X\in\rhullo 2(\set{P, \hthet {P}\lambda(X)})$, $P=P^\ast=P'\in \hal U_1$, and $\hthet {P}\lambda(X)\in \hthet {P}\lambda(\hal U_0)=\hal U_1$, the convexity of $\,\hal U_1$ implies that $X\in \hal U_1$, as required. This completes the proof of part \eqref{lemmainttanga}.

The argument for \eqref{lemmainttangb} is similar.
Let $\phi$ denote the translation such that $\,\hal U_1=\phi(\hal U_0)$.  Let $\pair{P'}{\ell'}:=\pair{\phi(P^\ast)}{\phi(\ell^\ast)}$. As in the previous paragraph, we obtain that  $\pair{P'}{\ell'}$ and $\pair{P^\ast}{\ell^\ast}$ are both supporting lines of $\,\hal U_1$. Since $\dir{\ell'}=\dir{\ell^\ast}$, we have that $\ell'=\ell^\ast$. Thus,  using that $\,\hal U_1$ is edge-free, we obtain that $P'=P^\ast$. So $P'=\phi(P^\ast)$ is a fixed point of the translation $\phi$. Hence, $\phi$ is the identity map, and we conclude that
$\hal U_1=\phi(\hal U_0)=\hal U_0$, as required.
\end{proof}

\subsection{Technical lemmas}

We compose maps from right to left, so note the rule  $(\phi\circ \psi)(x)=\phi\bigl(\psi(x)\bigr)$. The first technical lemma we need is the following.

\begin{lemma}\label{lemmaBlowUp} 
Let $\phi\colon \preal\to\preal$ be  a homothety $\hthet P\lambda$  or a translation, let $P_0\in \preal$ be a point, and let $P_1=\phi(P_0)$. Then for every 
$\xi\in\real\setminus\set0$, 
\[ \phi \circ \hthet {P_0}\xi = \hthet {P_1}\xi \circ \phi \,\,
\text{ or, equivalently, }\,\,\hthet {P_1}\xi = \phi \circ \hthet {P_0}\xi\circ \phi^{-1}.
\]
\end{lemma}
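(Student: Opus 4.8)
The plan is to prove the identity by a single direct computation, after first observing that the two kinds of maps allowed for $\phi$, and the homotheties $\hthet{P_0}\xi$ and $\hthet{P_1}\xi$ as well, all share one convenient algebraic form. Writing $\hthet P\lambda(X)=\lambda X+(1-\lambda)P$, a positive homothety is the affine map $X\mapsto aX+b$ with $a=\lambda$ and $b=(1-\lambda)P$, while a translation $X\mapsto P+X$ is the same form with $a=1$ and $b=P$. In either case $a\in\real\setminus\set0$ (indeed $a>0$), so $\phi$ is invertible, and I would fix the notation $\phi(X)=aX+b$ once and for all, together with $\hthet{P_0}\xi(X)=\xi X+(1-\xi)P_0$ and $\hthet{P_1}\xi(X)=\xi X+(1-\xi)P_1$.

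Next I would evaluate both sides of the first displayed equality on an arbitrary point $X\in\preal$. The left-hand side is immediate:
\[
\phi\bigl(\hthet{P_0}\xi(X)\bigr)=a\bigl(\xi X+(1-\xi)P_0\bigr)+b=a\xi X+a(1-\xi)P_0+b.
\]
For the right-hand side the key substitution is $P_1=\phi(P_0)=aP_0+b$, after which
\[
\hthet{P_1}\xi\bigl(\phi(X)\bigr)=\xi(aX+b)+(1-\xi)(aP_0+b)=a\xi X+(1-\xi)aP_0+\bigl(\xi b+(1-\xi)b\bigr).
\]
Here the two $b$-terms collapse to a single $b$, so the right-hand side equals $a\xi X+(1-\xi)aP_0+b$, which matches the left-hand side verbatim. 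Since $X$ was arbitrary, the two maps agree, proving $\phi\circ\hthet{P_0}\xi=\hthet{P_1}\xi\circ\phi$. The equivalent second form then follows at once by composing this equality on the right with $\phi^{-1}$, which exists because $a\neq0$.

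The hard part will be essentially nothing: this is a routine affine computation, and the only points that require care are bookkeeping ones. Specifically, one must insert the relation $P_1=\phi(P_0)$ at the right moment so that the otherwise unexplained $b$-contributions cancel to leave a single $b$, and one must make sure the unified form $X\mapsto aX+b$ genuinely covers both admissible choices of $\phi$ (it does, with $a=\lambda>0$ for a homothety and $a=1$ for a translation). An alternative would be to split the verification into the two cases separately, but folding them into one affine normal form avoids the duplication and makes the cancellation transparent.
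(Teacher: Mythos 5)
Your proof is correct, and the computation checks out: with $\phi(X)=aX+b$ and $P_1=aP_0+b$, both composites send $X$ to $a\xi X+a(1-\xi)P_0+b$, and the second form of the identity follows by composing with $\phi^{-1}$. The paper, however, takes a shorter structural route: it simply observes that $\phi\circ\hthet{P_0}\xi\circ\phi^{-1}$ is a homothety of ratio $\xi$ (conjugating a homothety by a homothety or translation preserves the ratio) and that it fixes $P_1=\phi(P_0)$; since a homothety is determined by its ratio and its unique fixed point when $\xi\neq1$, it must equal $\hthet{P_1}\xi$. That argument buys brevity and conceptual clarity at the cost of leaning on two folklore facts (ratio preservation under conjugation, and the fixed-point characterization) which are themselves most easily verified by exactly the affine computation you carried out; your version is therefore more self-contained and leaves nothing implicit, while the paper's is the one-line argument a reader versed in transformation groups would expect. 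One small point in your favor: your normal form $X\mapsto aX+b$ transparently handles the translation case ($a=1$), where the paper's appeal to ``the homothety that fixes $P_1$'' requires the mild extra remark that the conjugate still has ratio $\xi\neq1$ even though $\phi$ itself has no fixed point.
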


\begin{proof} Since 
$ \phi \circ \hthet {P_0}\xi\circ \phi^{-1}$ is clearly a homothety of ratio $\xi$ that fixes ${P_1}$,  this homothety is $\hthet {P_1}\xi$, as required.  
\end{proof}

The next technical lemma will also be needed. It  follows by straightforward computation  with the help of computer algebra; an  appropriate worksheet for Maple V Release 5 is available from the homepage of the first author.
After stating the lemma, we give a more geometrical and \emph{short} proof.

\begin{figure}[htb] 
\centerline
{\includegraphics[scale=1.1]{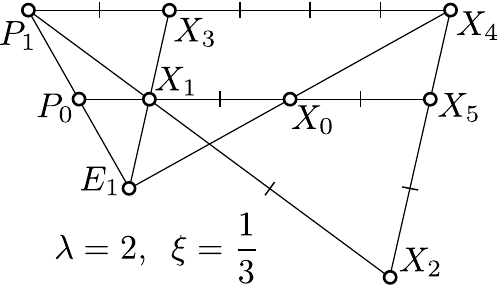}}
\caption{An illustration of Lemma~\ref{lemmaTechnmwsgT}
\label{figtLmsthrj}}
\end{figure}%

\begin{lemma}\label{lemmaTechnmwsgT}
Let $\lambda,\xi\in \real\setminus\set 0$, let ${E_1}, P_0, X_0\in \preal$, and define the  points
\begin{align*}
&(1)\lmskip P_1:=\hthet {E_1}{\lambda}(P_0),&&(2)\lmskip  X_1:=\hthet{P_0}\xi(X_0),
&& (3) \lmskip  X_2:=\hthet{P_1}{1/\xi}(X_1),\cr
&(4)\lmskip  X_3:=\hthet {E_1}{\lambda}(X_1), &&(5)\lmskip    X_4:=\hthet{P_1}{1/\xi}(X_3) &&(6)\lmskip   X_5:=\hthet {X_4}\xi (X_2).
\end{align*}
Then $\hthet{X_1}{1/\lambda}(X_5) = X_0$.  
\end{lemma}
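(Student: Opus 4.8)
The cleanest route is to use Lemma~\ref{lemmaBlowUp}, which gives a commutation rule between a homothety (or translation) $\phi$ and a second homothety $\hthet{P_0}\xi$, namely $\phi\circ\hthet{P_0}\xi=\hthet{\phi(P_0)}\xi\circ\phi$. Here the map $\phi:=\hthet{E_1}\lambda$ plays the role of $\phi$, and $P_1=\hthet{E_1}\lambda(P_0)=\phi(P_0)$ matches the hypothesis of that lemma exactly. The whole statement is an identity among compositions of homotheties, so my plan is to express each of $X_1,\dots,X_5$ as the image of $X_0$ under an explicit composite homothety, then simplify the composite that sends $X_0$ to $X_5$ using Lemma~\ref{lemmaBlowUp}, and finally check that composing with $\hthet{X_1}{1/\lambda}$ returns the identity on $X_0$.

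\textbf{Key steps.} First I would track the defining maps:
\[
X_1=\hthet{P_0}\xi(X_0),\quad
X_2=\hthet{P_1}{1/\xi}(X_1),\quad
X_3=\hthet{E_1}\lambda(X_1),\quad
X_4=\hthet{P_1}{1/\xi}(X_3).
\]
Since homotheties with a common center multiply their ratios and the composite of homotheties is again a homothety, the ratio of the net map $X_0\mapsto X_5=\hthet{X_4}\xi(X_2)$ can be read off immediately: going $X_0\to X_1$ contributes $\xi$, then $X_1\to X_2$ contributes $1/\xi$, and the outer $\hthet{X_4}\xi$ contributes $\xi$, for a total ratio $\xi$ for the map $X_0\mapsto X_5$. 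Composing with $\hthet{X_1}{1/\lambda}$ then has ratio $\xi/\lambda$; but I claim the correct bookkeeping, once the center-shifts via Lemma~\ref{lemmaBlowUp} are accounted for, collapses everything to ratio $1$, i.e.\ to a translation, and the remaining task is to verify that this translation fixes $X_0$.

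\textbf{Carrying out the simplification.} The efficient way is to apply Lemma~\ref{lemmaBlowUp} with $\phi=\hthet{E_1}\lambda$ to rewrite the homotheties centered at $P_1=\phi(P_0)$ in terms of homotheties centered at $P_0$ conjugated by $\phi$. Concretely, $\hthet{P_1}{1/\xi}=\phi\circ\hthet{P_0}{1/\xi}\circ\phi^{-1}$ and $\hthet{P_1}\xi$-type centers transform the same way, so each appearance of a $P_1$-centered map becomes a $P_0$-centered map sandwiched between $\phi$ and $\phi^{-1}$. After substituting these conjugated forms into the expressions for $X_2$ and $X_4$ and into $\hthet{X_4}\xi$, the $\phi,\phi^{-1}$ factors telescope and the $\xi$ and $1/\xi$ ratios cancel in pairs, leaving a composite whose action on $X_0$ is $X_5=\hthet{X_1}\lambda(X_0)$. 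Then $\hthet{X_1}{1/\lambda}(X_5)=\hthet{X_1}{1/\lambda}\bigl(\hthet{X_1}\lambda(X_0)\bigr)=X_0$, since $\hthet{X_1}{1/\lambda}$ is the inverse of $\hthet{X_1}\lambda$, both centered at $X_1$.

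\textbf{Main obstacle.} The genuine difficulty is not conceptual but bookkeeping: keeping track of \emph{which} center each homothety uses after the conjugations, since $X_4$ (the center of the outermost map) itself depends on $X_3$ and hence on all the earlier points. The delicate point is verifying that the center $X_4$ and the center $X_1$ interact so that the net map is exactly $\hthet{X_1}\lambda$ and not some other homothety with the right ratio but a shifted center. I expect the clean way to avoid a morass of coordinates is to identify $X_4$ as $\phi(X_2')$ for an appropriate auxiliary point and invoke Lemma~\ref{lemmaBlowUp} once more so that $\hthet{X_4}\xi=\phi\circ\hthet{X_2'}\xi\circ\phi^{-1}$, making the final telescoping transparent; failing that, the fallback is the direct affine computation indicated in the remark preceding the lemma, which the authors note can be discharged by computer algebra.
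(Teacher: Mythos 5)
Your strategy---conjugating the $P_1$-centred homotheties through $\phi:=\hthet{E_1}\lambda$ via Lemma~\ref{lemmaBlowUp} and reducing everything to the single identity $X_5=\hthet{X_1}{\lambda}(X_0)$---is viable and genuinely different from the paper's argument, which is a two-line vector computation: from (3), (5), (6) one gets $\overrightarrow{X_4X_5}=\xi\,\overrightarrow{X_4X_2}=\overrightarrow{X_3X_1}$, so $\tuple{X_1,X_3,X_4,X_5}$ is a parallelogram, and then $\overrightarrow{X_1X_5}=\overrightarrow{X_3X_4}=\frac{\xi-1}{\xi}\overrightarrow{X_3P_1}=\frac{\lambda(\xi-1)}{\xi}\overrightarrow{X_1P_0}=\lambda\overrightarrow{X_1X_0}$. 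Your key intermediate claim $X_5=\hthet{X_1}\lambda(X_0)$ is exactly equivalent to that last chain, and it is true.

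Two things in your write-up need repair, though. First, the ratio bookkeeping is wrong: because $X_4$ and $X_1$ themselves depend affinely on $X_0$, the actual affine map $X_0\mapsto X_5$ is a homothety of ratio $\xi+\lambda-\lambda\xi$, not $\xi$, so the naive product of ratios proves nothing---this is precisely why the lemma is not a triviality. Second, the decisive step, ``the factors telescope, leaving $X_5=\hthet{X_1}\lambda(X_0)$,'' is asserted rather than proved, and it carries essentially the whole content of the lemma; moreover the $\xi$ and $1/\xi$ do \emph{not} cancel to the identity but only to a translation. To actually close your route: by (2), (4), (5) and Lemma~\ref{lemmaBlowUp}, $X_4=\hthet{P_1}{1/\xi}\bigl(\phi(\hthet{P_0}\xi(X_0))\bigr)=\hthet{P_1}{1/\xi}\bigl(\hthet{P_1}{\xi}(\phi(X_0))\bigr)=\phi(X_0)$; then Lemma~\ref{lemmaBlowUp}, applied once to $\hthet{X_4}\xi=\hthet{\phi(X_0)}{\xi}$ and once to $\phi^{-1}\circ\hthet{P_1}{1/\xi}$, gives $X_5=\phi\bigl((\hthet{X_0}\xi\circ\hthet{P_0}{1/\xi})(\phi^{-1}(X_1))\bigr)$; the inner composite is the translation by $(1-\xi)(X_0-P_0)=X_0-X_1$, and since the linear part of $\phi$ is $\lambda\cdot\mathrm{id}$, this yields $X_5=X_1+\lambda(X_0-X_1)=\hthet{X_1}\lambda(X_0)$, whence $\hthet{X_1}{1/\lambda}(X_5)=X_0$. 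With these details supplied your proof is correct; without them it is only a plausible plan.
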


\begin{proof} 
If $\xi=1$ or $\lambda=1$, then the statement is obvious.
If $\xi\ne1\ne\lambda$, then  Figure~\ref{figtLmsthrj} visualizes what we have.
By (3) and (5) we deduce $\xi\overrightarrow{X_4X_2}=\overrightarrow{X_3X_1}$,
hence (6) gives $\overrightarrow{X_4X_5}=\overrightarrow{X_3X_1}$,
so quadrangle $\tuple{X_1,X_3,X_4,X_5}$ is a parallelogram.
Thus,
\[
\overrightarrow{X_1X_5}=\overrightarrow{X_3X_4}
 \overset{(5)}=\frac{\xi-1}{\xi}\overrightarrow{X_3P_1}
 \overset{(1),(4)}=\frac{\lambda(\xi-1)}{\xi}\overrightarrow{X_1P_0}
 \overset{(2)}=\lambda\overrightarrow{X_1X_0}.\qedhere
\]
\end{proof}

The following lemma is well known, especially without the adjective ``positive''. However, there are other variants and the corresponding terminology is not unique in the literature;
for example,  Schneider~\cite[page xii]{Schneider1993}
includes translations in the concept of positive homotheties. 
The terminological ambiguity in the literature justifies that we formulate this lemma and give its trivial proof.

\begin{lemma}\label{lemmafrcSPpRtT} Let $G$ be the collection of all positive homotheties and all translations of the plane. Then $G$ is a group with respect to composition. 
\end{lemma}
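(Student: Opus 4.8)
The plan is to verify the two group axioms that are not obvious, namely closure under composition and closure under inverses, since associativity is inherited from composition of arbitrary maps and the identity map is the translation $\hthet{O}{1}$ (equivalently, $X \mapsto O + X$ with $O = \pair 0 0$). First I would fix convenient coordinate-free descriptions: a positive homothety is any map of the form $X \mapsto P + \lambda(X - P)$ with $0 < \lambda$, and a translation is any map $X \mapsto X + V$ with $V \in \preal$. Observe that both kinds of map can be written uniformly as affine maps $X \mapsto \lambda X + W$ with $0 < \lambda \in \real$ and $W \in \preal$; indeed $\hthet P\lambda$ sends $X \mapsto \lambda X + (1-\lambda)P$, and a translation sends $X \mapsto 1 \cdot X + V$. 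Thus $G$ is exactly the set of affine maps of the special form $X \mapsto \lambda X + W$ with $\lambda > 0$.

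Given this uniform description, closure is a one-line computation. If $\phi\colon X \mapsto \lambda X + W$ and $\psi\colon X \mapsto \mu X + U$ are both in $G$ (so $\lambda, \mu > 0$), then
\[
(\phi \circ \psi)(X) = \lambda(\mu X + U) + W = (\lambda\mu)X + (\lambda U + W),
\]
which again has positive leading coefficient $\lambda\mu > 0$, hence lies in $G$. For inverses, from $Y = \lambda X + W$ we solve $X = \lambda^{-1}Y - \lambda^{-1}W = \lambda^{-1} Y + (-\lambda^{-1}W)$; since $\lambda^{-1} > 0$, this inverse map is again of the required form and so belongs to $G$. The identity is $X \mapsto X$, i.e.\ $\lambda = 1$ and $W = O$, which is a (trivial) translation in $G$. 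Associativity is automatic because composition of functions is always associative.

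The only genuine point requiring care, and hence the one I would flag as the main obstacle, is confirming that the uniform normal form $X \mapsto \lambda X + W$ with $\lambda > 0$ really does capture exactly the positive homotheties and translations and nothing extraneous: one must check that every such affine map is either a positive homothety (when $\lambda \neq 1$, by locating the fixed point $P = (1-\lambda)^{-1}W$) or a translation (when $\lambda = 1$). This is precisely the content foreshadowed by the paper's remark that \quot{a homothety-translation is always a homothety or a translation}, so I would include that case split explicitly. Once the normal form is established, everything else is the routine arithmetic displayed above, and the proof is complete.
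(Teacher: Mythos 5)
Your proof is correct, but it takes a different route from the paper's. The paper's entire proof is two sentences: it observes that positive homotheties and translations are exactly the similarity transformations that preserve the directions of directed lines, notes that this property is obviously inherited by a composite $\phi_1\circ\phi_2$, and concludes closure (treating inverses as implicit). Your argument instead reduces everything to the affine normal form $X\mapsto \lambda X+W$ with $\lambda>0$ and verifies closure, inverses, and the identity by direct computation, with the case split $\lambda=1$ (translation) versus $\lambda\neq 1$ (positive homothety with fixed point $(1-\lambda)^{-1}W$) supplying the identification of the normal form with $G$. The paper's version is shorter but silently relies on the converse fact that every direction-preserving similarity of the plane is a positive homothety or a translation, and it does not explicitly address inverses; your version is fully self-contained, makes that converse explicit, and yields as a by-product the ratio $\lambda\mu$ and the center of the composite, which is the kind of concrete information the paper later extracts separately (e.g., when identifying $\phi$ as $\hthet{E_1}{\lambda}$ in the proof of Lemma~\ref{lemmangszrnwnKztlS}). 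Both proofs are sound; yours trades brevity for explicitness.
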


\begin{proof}  It suffices to show that if $\phi_1$ and $\phi_2$ belongs to $G$, then so does $\phi:=\phi_1\circ\phi_2$. Since $\phi_1$ and $\phi_2$ are similarity transformations that preserve the directions of directed lines, the same holds for $\phi$. This implies that $\phi\in G$.
\end{proof}

\subsection{The lion's share of the proof}
First, we prove the following lemma.

\begin{lemma}\label{lemmangszrnwnKztlS} 
Assume that $\,\hal U_0$ is an edge-free compact convex subset of $\preal$, $\phi\colon \preal\to\preal$ is a positive homothety or a translation, $\,\hal U_1=\phi(\hal U_0)$,
 $P_0\in\inter{\hal U_0}$, $P_1=\phi(P_0)$, $\xi\in (0,1)=[0,1]\setminus\set{0,1}$, $\,\hal U_0(\xi)=\hthet{P_0}\xi(\hal U_0)$, $\,\hal U_1(\xi)=\hthet{P_1}\xi(\hal U_1)$, and $\,\hal U_0(\xi)$ and $\,\hal U_1(\xi)$ are internally tangent. Then at least one of the following two assertions hold:
\begin{enumeratei}
\item \label{lemmangszrnwnKztlSa} $\,\hal U_0\subseteq \hal U_1$ and  $\,\hal U_0(\xi)\subseteq \hal U_1(\xi)$, or
\item\label{lemmangszrnwnKztlSb} $\,\hal U_1\subseteq \hal U_0$ and $\,\hal U_1(\xi)\subseteq \hal U_0(\xi)$.
\end{enumeratei}
\end{lemma}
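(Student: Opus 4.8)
The plan is to reduce the assertion to the internal-tangency dichotomy already established in Lemma~\ref{lemmainttang}, but applied to the shrunken pair $\hal U_0(\xi),\hal U_1(\xi)$ rather than to $\hal U_0,\hal U_1$. The key preliminary observation is that these two shrunken sets are related by the \emph{same} map $\phi$. Indeed, with $P_1=\phi(P_0)$, the commutation identity of Lemma~\ref{lemmaBlowUp} gives $\phi\circ\hthet{P_0}{\xi}=\hthet{P_1}{\xi}\circ\phi$, whence
\[
\hal U_1(\xi)=\hthet{P_1}{\xi}(\hal U_1)=\hthet{P_1}{\xi}\bigl(\phi(\hal U_0)\bigr)=\phi\bigl(\hthet{P_0}{\xi}(\hal U_0)\bigr)=\phi(\hal U_0(\xi)).
\]
Since $\hthet{P_0}{\xi}$ and $\phi$ are similarities, and since $P_0\in\inter{\hal U_0}$ forces $\hal U_0$ (hence $\hal U_0(\xi)$ and $\hal U_1(\xi)$) to be two-dimensional, both $\hal U_0(\xi)$ and $\hal U_1(\xi)$ are non-singleton edge-free compact convex sets; by hypothesis they are internally tangent. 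Thus Lemma~\ref{lemmainttang} applies to them.

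If $\phi$ is a translation, then Lemma~\ref{lemmainttang}\eqref{lemmainttangb} forces $\phi$ to be the identity, so $\hal U_1=\hal U_0$ and $\hal U_1(\xi)=\hal U_0(\xi)$, and both \eqref{lemmangszrnwnKztlSa} and \eqref{lemmangszrnwnKztlSb} hold trivially; the same is true when $\phi=\hthet{P}{\lambda}$ with $\lambda=1$. Hence I may assume $\phi=\hthet{P}{\lambda}$ with $0<\lambda\neq1$, and then Lemma~\ref{lemmainttang}\eqref{lemmainttanga} yields $\partial\hal U_0(\xi)\cap\partial\hal U_1(\xi)=\set{P}$ together with $\hal U_0(\xi)\subseteq\hal U_1(\xi)$ when $\lambda>1$ and $\hal U_1(\xi)\subseteq\hal U_0(\xi)$ when $0<\lambda<1$. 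This already secures the ``$(\xi)$-half'' of \eqref{lemmangszrnwnKztlSa}, resp.\ of \eqref{lemmangszrnwnKztlSb}.

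The hard part is transferring these inclusions from the shrunken sets back to $\hal U_0$ and $\hal U_1$, and the decisive tool is to locate the homothety center $P$. Because $P_0\in\inter{\hal U_0}$ and $\xi\in(0,1)$, each point $\hthet{P_0}{\xi}(X)=(1-\xi)P_0+\xi X$ with $X\in\hal U_0$ is a convex combination carrying positive weight on the interior point $P_0$, so it lies in $\inter{\hal U_0}$; thus $\hal U_0(\xi)\subseteq\hal U_0$. Combined with $P\in\partial\hal U_0(\xi)$ from the previous paragraph, this gives $P\in\hal U_0$. Convexity then closes the argument: if $\lambda>1$, then for each $X\in\hal U_0$ the point $\hthet{P}{1/\lambda}(X)=(1-1/\lambda)P+(1/\lambda)X$ is a convex combination of $P,X\in\hal U_0$ and hence lies in $\hal U_0$, so $X=\hthet{P}{\lambda}\bigl(\hthet{P}{1/\lambda}(X)\bigr)\in\hthet{P}{\lambda}(\hal U_0)=\hal U_1$, proving $\hal U_0\subseteq\hal U_1$ and completing \eqref{lemmangszrnwnKztlSa}; symmetrically, if $0<\lambda<1$, then $\hthet{P}{\lambda}(X)=(1-\lambda)P+\lambda X\in\hal U_0$ for every $X\in\hal U_0$, giving $\hal U_1\subseteq\hal U_0$ and completing \eqref{lemmangszrnwnKztlSb}. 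The one step that genuinely needs the hypotheses (edge-freeness and internal tangency of the shrunken sets) is the location of $P$ on $\partial\hal U_0(\xi)$; everything else is the commutation identity and elementary convexity.
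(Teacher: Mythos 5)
Your proposal is correct, and it diverges from the paper's proof in an interesting way. Both arguments begin identically: the commutation identity of Lemma~\ref{lemmaBlowUp} gives $\hal U_1(\xi)=\phi(\hal U_0(\xi))$, and Lemma~\ref{lemmainttang} applied to the internally tangent shrunken pair disposes of the translation case and, in the homothety case $\phi=\hthet{P}{\lambda}$ with $\lambda\neq 1$, yields the inclusion between $\hal U_0(\xi)$ and $\hal U_1(\xi)$ together with the identification of the homothety center $P$ as the unique point of $\partial\hal U_0(\xi)\cap\partial\hal U_1(\xi)$. Where you part ways is the transfer back to $\hal U_0$ and $\hal U_1$: the paper runs the six-point configuration of the technical Lemma~\ref{lemmaTechnmwsgT} (the parallelogram computation with $X_1,\dots,X_5$) and a chain of memberships through $\hal U_1(\xi)$, whereas you observe that $P\in\partial\hal U_0(\xi)\subseteq\hal U_0(\xi)=\hthet{P_0}{\xi}(\hal U_0)\subseteq\inter{\hal U_0}$ because $P_0\in\inter{\hal U_0}$ and $\xi\in(0,1)$, after which the inclusion $\hal U_0\subseteq\hal U_1$ (for $\lambda>1$) or $\hal U_1\subseteq\hal U_0$ (for $\lambda<1$) is a one-line consequence of convexity: a positive homothety with center inside a convex set and ratio in $(0,1]$ maps that set into itself. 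Each step checks out, including the standard fact that a convex combination with positive weight on an interior point lands in the interior. Your route is genuinely more elementary and, if adopted, would render Lemma~\ref{lemmaTechnmwsgT} (which is used nowhere else in the paper) superfluous; the only thing the paper's longer computation buys is that it never needs to notice explicitly that the tangency point lies in $\hal U_0$ itself, which is precisely the observation that makes your shortcut work.
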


\begin{proof} We can assume that  $\phi$ is not the identity map, since otherwise the statement trivially holds. Computing by Lemma~\ref{lemmaBlowUp}, we obtain that
\begin{align}
\hal U_1(\xi) &= \hthet{P_1}{\xi}\bigl(\phi(\hal U_0)\bigr)
= (\hthet{P_1}{\xi}\circ \phi\circ \hthet{P_0}{1/\xi})\bigl(\hal U_0(\xi)\bigr) \cr
& \overset {\textup{Lem.\,\ref{lemmaBlowUp}}} = 
(\phi\circ\hthet{P_0}{\xi}\circ \hthet{P_0}{1/\xi})\bigl(\hal U_0(\xi)\bigr) = \phi\bigl(\hal U_0(\xi)\bigr),
\label{alignZTrnVtGhJ}
\end{align}
which shows that Lemma~ \ref{lemmainttang} is applicable to the triplet $\tuple{\hal U_0(\xi),\hal U_1(\xi),\phi}$.
Let $\pair {E_1}\ell$ be a common pointed supporting line of $\,\hal U_0(\xi)$ and $\,\hal U_1(\xi)$.
Since $\phi$ is not the identity map, 
Lemma~ \ref{lemmainttang} gives that
$\phi=\hthet{E_1}\lambda$ for some $\lambda>0$. 
The systems $\tuple{\hal U_0,\hal U_1,\lambda, \phi=\hthet{E_1}\lambda}$ and $\tuple{\hal U_1,\hal U_0,1/\lambda, \phi^{-1}=\hthet{E_1}{1/\lambda}}$ play symmetric roles, whence we can assume that $\lambda\geq 1$. We obtain by Lemma~\ref{lemmainttang} that 
\begin{equation}
\hal U_0(\xi)\subseteq \hal U_1(\xi).
\label{eqdhbzWrnBghRQ}
\end{equation}
In order to prove the inclusion $\,\hal U_0\subseteq \hal U_1$, let $X_0\in \hal U_0$. Since $\phi=\hthet{E_1}\lambda$, 
we have that $P_1 = \hthet{E_1}\lambda(P_0)$. Consider the points $X_1,\dots, X_5$ defined in Lemma~\ref{lemmaTechnmwsgT}. By the definition of $\,\hal U_0(\xi)$, we have that $X_1\in \hal U_0(\xi)$, whereby \eqref{eqdhbzWrnBghRQ} yields that  $X_1\in \hal U_1(\xi)$. Thus, since $\hthet{P_1}{1/\xi}$ is the inverse of $\hthet{P_1}\xi$, the definition of $\,\hal U_1(\xi)$ leads to $X_2\in \hal U_1$. Since $X_1\in \hal U_0(\xi)$, we obtain by equation (4) of Lemma~\ref{lemmaTechnmwsgT}, $\hthet{E_1}\lambda=\phi$, and \eqref{alignZTrnVtGhJ} that $X_3\in \hal U_1(\xi)$. 
This gives that $X_4\in \hal U_1$. 
Since $X_2,X_4\in \hal U_1$ and $0<\xi<1$, the convexity of $\,\hal U_1$ implies that 
$X_5\in \hal U_1$. Using that $X_1\in \hal U_1(\xi)\subseteq \hal U_1$ and that $0< 1/\lambda \leq 1$, the convexity of $\,\hal U_1$ gives that $\hthet{X_1}{1/\lambda}(X_5)\in \hal U_1$. Thus, $X_0\in \hal U_1$ by Lemma~\ref{lemmaTechnmwsgT}, proving that $\,\hal U_0\subseteq \hal U_1$, as required.
\end{proof}

Now, armed with the auxiliary statements proved so far, we are in the position to prove the (Main) Lemma~\ref{lemmaEFmain}.

\begin{proof}[Proof of Lemma~\ref{lemmaEFmain}] 
Lemma~\ref{lemmasinglton} allows us to assume that none of $\,\hal U_0$ and $\,\hal U_1$ is a singleton. For the sake of contradiction, suppose that the lemma fails. Let $\,\hal U_0,\hal U_1,A_0,A_1$, and $A_2$ witness this failure. We can assume that  $A_0$, $A_1$, and $A_2$ is the counterclockwise list of the vertices of triangle ${\hsz}$; see \eqref{eqpbxZhFtT}. 
If \ref{thmmain}\eqref{thmmaina} holds, then $\phi\colon \preal\to\preal$ will denote the transformation $\hthet P\lambda$ mentioned in \ref{thmmain}\eqref{thmmaina}. Similarly, if \ref{thmmain}\eqref{thmmainb} holds, then
$\phi\colon\preal\to\preal$ stands for a translation according to  \ref{thmmain}\eqref{thmmainb}. In both cases, $\,\hal U_1=\phi(\hal U_0)$. Fix an internal point $P_0$ of $\,\hal U_0$, and let $P_1:=\phi(P_0)$. For $i\in \set{0,1}$ and every real number $\xi\in [0,1]$, let 
\begin{equation}
\hal U_i(\xi):=\hthet{P_i}\xi(\hal U_i).
\label{eqVxhsxzTfHh}
\end{equation}
Note that $\hthet{P_i}\xi$ is a positive homothety only for $\xi>0$ but  \eqref{eqtxthmThdF} is meaningful also for $\lambda=0$. In particular, $\,\hal U_i(0):=\hthet{P_i}0(\hal U_i)$ makes sense and it is understood as $\set{P_i}$. For $\xi=0$, we trivially have that $\,\hal U_1(\xi) =\phi \bigl(\hal U_0(\xi)\bigr)$. So let $\xi>0$.
Lemma~\ref{lemmaBlowUp} and 
 \eqref{eqVxhsxzTfHh} 
yield that 
\begin{align}
\hal U_1(\xi) &=\hthet{P_1}\xi (\hal U_1) \cr
&= (\phi\circ\hthet{P_0}\xi)\bigl(\phi^{-1}(\hal U_1)\bigr)
=\phi \bigl(\hthet{P_0}\xi(\hal U_0)\bigr)=\phi(\hal U_0(\xi)).
\label{eqKrPdJvsT}
\end{align}
Thus, 
\begin{equation}
\hal U_1(\xi) =\phi \bigl(\hal U_0(\xi)\bigr),\quad\text{ for all }\xi\in[0,1]\text{ and }i\in\set{0,1}.
\label{eqThTspZnPrmrq}
\end{equation}
Since $\,\hal U_i(\xi)\subseteq \hal U_i(1)=\hal U_i\subseteq {\hsz}$, we have also that $\,\hal U_i(\xi)\subseteq {\hsz}$.

Let $H$ be the set of all $\eta\in[0,1]$ such that \eqref{eqpbxThmlnygrSmzWrB} holds for $\,\hal U_0(\eta)$ and $\,\hal U_1(\eta)$ with some $j$ and $k$. Since $0\in H$ by Lemma~\ref{lemmasinglton}, or even by Lemma~\ref{lemmafpnTlsl}, $H\neq\emptyset$. Hence, $H$ has a supremum, which we denote by $\xi$. 
It follows from Lemma~\ref{lemmafpnTlsl} and continuity that $\xi > 0$. A standard compactness argument 
 shows that $\xi\in H$, that is, $\xi$ is the maximal element of $H$. Since a more involved, similar, but still standard  argument will be given after \eqref{eqlmBdkadhzksllVt}, we do not give the details of this compactness argument here; note that the omitted details, modulo insignificant changes, are given in the extended version, arXiv:1610.02540, of Cz\'edli~\cite{czganeasy}. 
Taking our indirect assumption and $\xi=\max(H)$ into account, we have that
\begin{equation}
0<\xi :=\max(H)<1.
\label{eqtxtxiisMax}
\end{equation}
Since $\xi\in\goodset$, we can assume that the indices are chosen so that, as Figure~\ref{fightmThGdrrgtT} shows, $\,\hal U_1(\xi)$
is included in the grey-filled ``curved-backed trapezoid'' 
\begin{equation}
\trap(\xi):= \rhullo2(\set{A_0,A_1}\cup \hal U_0(\xi)).
\label{eqPxidfnt}
\end{equation}
In Figure~\ref{fightmThGdrrgtT}, the  ``back'' of this trapezoid is the thick curve connecting $E_0$ and $F_0$.  If $\,\hal U_1(\xi)$ was included in the interior of $\trap(\xi)$, then there would be a (small) positive $\epsilon$ such that $\,\hal U_1(\xi+\epsilon )\subseteq \trap(\xi)\subseteq \trap(\xi+\epsilon)$  and $\xi+\epsilon$ would belong to $\goodset$, contradicting the fact that $\xi$ is the largest element of $H$. 
Hence,  $\,\hal U_1(\xi)\subseteq \trap(\xi)$, but the intersection $\partial \hal U_1(\xi)\cap \partial \trap(\xi)$  has at least one point. So we can pick a point
$E_1\in \partial \hal U_1(\xi)\cap \partial \trap(\xi)$. 
 Since $\,\hal U_1(\xi)\loosin \hal U_1(1)=\hal U_1\subseteq {\hsz}$,  we have that $E_1\notin \partial {\hsz}$. Since the ``left leg'', that is, the straight line segment $[A_0,E_0]$, and the ``right leg'' $[A_1,F_0]$ of $\trap(\xi)$ play symmetric roles, it suffices to consider only the following two cases:
either $E_1$ belongs  to the ``back'' of $\trap(\xi)$, including its endpoints $E_0$ and $F_0$, or $E_1$ belongs to the ``left leg'' $[A_0,E_0]$, excluding $E_0$.

\begin{figure}[ht] 
\centerline
{\includegraphics[scale=1.1]{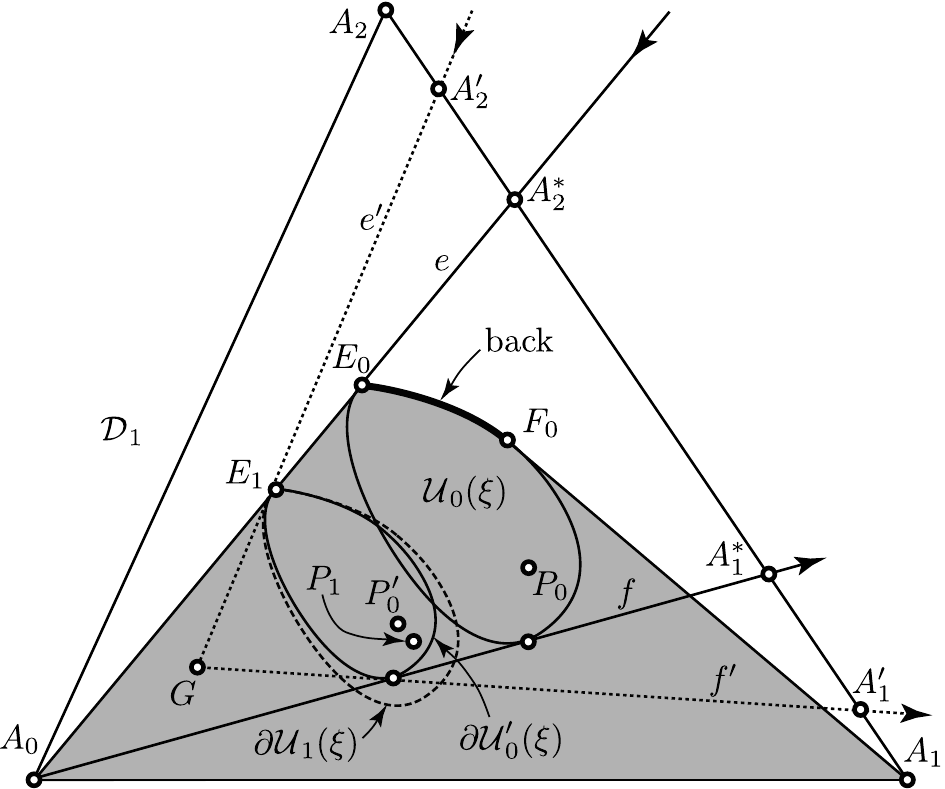}}
\caption{Illustration for the proof of Lemma~\ref{lemmaEFmain} 
\label{fightmThGdrrgtT}}
\end{figure}%

First, assume that $E_1$ belongs to the ``back'' of $\trap(\xi)$. 
Then, clearly, $E_1$ belongs to $\partial\hal U_0(\xi)$. 
Since $\trap(\xi)$ is a compact convex set, it has a directed
supporting line $\ell$ through $E_1$. Since $\,\hal U_0(\xi)\subseteq \trap(\xi)$ and  $\,\hal U_1(\xi)\subseteq \trap(\xi)$, both $\,\hal U_0(\xi)$ and $\,\hal U_1(\xi)$ are on the left of $\ell$. Using that $E_1$ is in $\,\hal U_0(\xi)\cap \hal U_1(\xi)$, it follows that  $\pair {E_1}\ell$ is a common  pointed supporting line of $\,\hal U_0(\xi)$ and $\,\hal U_1(\xi)$. Hence, $\,\hal U_0(\xi)$ and $\,\hal U_1(\xi)$ are internally tangent. Furthermore, it follows from Lemma~\ref{lemmafrcSPpRtT} and \eqref{eqVxhsxzTfHh} that  $\,\hal U_1(\xi)$ is obtained from  $\,\hal U_0(\xi)$ by a translation or a positive homothety. 
Therefore, Lemma~\ref{lemmangszrnwnKztlS} yields that $\,\hal U_0\subseteq \hal U_1$ or $\,\hal U_1\subseteq \hal U_2$. This trivially implies \eqref{eqpbxThmlnygrSmzWrB}, contradicting the initial assumption of the proof. Thus, the first case where $E_1$ belongs to the back of  $\trap(\xi)$ has been excluded.

Second, assume that $E_1$ belongs to the ``left leg'' $[A_0,E_0]$ as illustrated in Figure~\ref{fightmThGdrrgtT}. Since $\,\hal U_1(\xi)\loosin U_1\subseteq \hsz$ implies that  $E_1\notin\partial \hsz$, we have that $E_1\ne A_0$. We can assume that  $E_1\neq E_0$ since the opposite case has already been settled. Hence, letting $\nu:=\dist{A_0}{E_1}/\dist{A_0}{E_0}$, we have that $0<\nu<1$. Let 
\begin{align}
\hal U_0'&:=\hthet{A_0}\nu(\hal U_0),\,\,\text{ }\,\, P_0':=\hthet{A_0}\nu(P_0),\label{eqdhzTrMBQa}
\\
\phi'&:=  \phi \circ \hthet{A_0}{1/\nu},\,\,\text{ and }\,\, \hal U_0'(\xi):=\hthet{P_0'}\xi(\hal U_0').
\label{eqdhzTrMBQb}
\end{align}
The position of $\,\hal U_0'(\xi)$  in Figure~\ref{fightmThGdrrgtT} is justified by 
\begin{align}
&\hthet{A_0}\nu(\hal U_0(\xi))  \eeqref{eqVxhsxzTfHh} (\hthet{A_0}\nu \circ \hthet{P_0}\xi)(\hal U_0)
\cr
&  \overset {\textup{Lem.\,\ref{lemmaBlowUp}}} = 
(\hthet{P'_0}\xi \circ \hthet{A_0}\nu)(\hal U_0) 
 \eeqref{eqdhzTrMBQa}  \hthet{P'_0}\xi (\hal U'_0) 
\eeqref{eqdhzTrMBQb} \hal U'_0(\xi).
\label{alignJsTfvSnpSzrTDb}
\end{align}
Since $\hthet{A_0}{1/\nu}$ is the inverse of $\hthet{A_0}{\nu}$,
\eqref{eqdhzTrMBQa} and \eqref{eqdhzTrMBQb} yield that
\begin{equation}
P_1=\phi'(P_0')\,\,\text{ and  }\,\,\hal U_1=\phi'(\hal U_0').
\label{eqhdmssztrGt}
\end{equation}
Computing by  Lemma~\ref{lemmaBlowUp} as in  \eqref{eqKrPdJvsT}, we obtain that
\begin{align}
&\hal U_1(\xi)   \eeqref{eqVxhsxzTfHh} \hthet{P_1}\xi (\hal U_1)  \eeqref{eqhdmssztrGt} 
(\hthet{P_1}\xi \circ \phi')(\hal U_0') \cr
& \overset {\textup{Lem.\,\ref{lemmaBlowUp}}} = 
(\phi'\circ\hthet{P_0'}\xi)(\hal U_0') 
\eeqref{eqdhzTrMBQb} 
\phi' \bigl(\hthet{P_0'}\xi(\hal U'_0)\bigr)=\phi'(\hal U'_0(\xi)). 
\label{eqhdzTbnFrWrmSf}
\end{align} 
According to  Figure~\ref{fightmThGdrrgtT},  
the directed line through the ``left leg'' of $\trap(\xi)$ 
will be denoted by $e$; clearly, $E_0,E_1,A_0\in e$. 
Since $\hthet{A_0}\nu(E_0)=E_1$, $\hthet{A_0}\nu(e)=e$, and $\hthet{A_0}\nu$ preserves supporting lines, it follows from \eqref{alignJsTfvSnpSzrTDb} that $\pair {E_1}e$ is a pointed supporting line of   $\,\hal U_0'(\xi)$. Hence,  $\pair {E_1}e$ is a common pointed supporting line of   $\,\hal U_0'(\xi)$
 and $\,\hal U_1(\xi)$. Thus, $\,\hal U'_0(\xi)$ and $\,\hal U_1(\xi)$ are internally tangent.
This fact and the equalities  \eqref{eqVxhsxzTfHh},   \eqref{eqdhzTrMBQb}, \eqref{eqhdmssztrGt}, and 
\eqref{eqhdzTbnFrWrmSf} show that the assumptions of Lemma~\ref{lemmangszrnwnKztlS}, with $\tuple{P_0',\hal U_0',\hal U_0'(\xi), \phi'}$
instead of $\tuple{P_0,\hal U_0,\hal U_0(\xi), \phi}$, hold.
Hence, we obtain from Lemma~\ref{lemmangszrnwnKztlS}  that 
\begin{equation}
\text{$\hal U_1\subseteq\hal U_0'$ \ or \ $\,\hal U_0'(\xi)\subseteq \hal U_1(\xi)$.}
\label{eqvmzzklTsghWzpPr}
\end{equation}

Now, assume the first inclusion in \eqref{eqvmzzklTsghWzpPr}.  Since $0<\nu<1$ in the definition of $\,\hal U_0'$ in \eqref{eqdhzTrMBQa}, we have that $\,\hal U_0'\subseteq \rhullo2 ( \set{A_0} \cup \hal U_0 )$. This together with 
$\,\hal U_1\subseteq \hal U_0'$ yield that  $\,\hal U_1\subseteq \rhullo2 ( \set{A_0} \cup \hal U_0 )$, whence the third line of
\eqref{eqpbxThmlnygrSmzWrB} holds. This contradicts our indirect assumption and so excludes the first  inclusion  in \eqref{eqvmzzklTsghWzpPr}.

So we are left with the second inclusion  given in \eqref{eqvmzzklTsghWzpPr}, that is,  $\,\hal U_0'(\xi) \subseteq \hal U_1(\xi)$, as shown in Figure~\ref{fightmThGdrrgtT}.
Let $f\neq e$ be the other supporting line of $\,\hal U_0(\xi)$ through $A_0$; see Figure~\ref{fightmThGdrrgtT} again. 
It follows from  \eqref{alignJsTfvSnpSzrTDb} that $f$ is a supporting line of $\,\hal U_0'(\xi)$ as well.
The intersection of $e$ and $f$ with the line segment $[A_1,A_2]$ will be denoted by $A_2^\ast$ and 
$A_1^\ast$, respectively; see Figure~\ref{fightmThGdrrgtT}. Since
$\hal U_0(\xi)\loosin {\hsz}$,  the points $A_1^\ast$ and $A_2^\ast$ are strictly between $A_1$ and $A_2$ in the line segment 
$[A_1,A_2]$. Using that $\xi<1$ and $\,\hal U_0'(\xi) \subseteq \hal U_1(\xi)\loosin \hal U_1(1)=\hal U_1\subseteq {\hsz}$, see \eqref{eqpbxZhFtT}, we have that  $\dist {A_0}{\hal U_0'(\xi) }>0$.  Both $e$ and $f$ can be turned continuously 
around $\,\hal U_0'(\xi) $. The precise meaning of this continuity is given in Cz\'edli and Stach\'o~\cite{czgstacho}, where \emph{pointed} supporting lines are ``slide-turned''. However, the edge-freeness of $\,\hal U_0'(\xi)$ allows us to forget about the uniquely defined support point of a pointed supporting line when we refer to \cite{czgstacho}. So, we turn $e$ around $\,\hal U_0'(\xi) $ counterclockwise  sufficiently small to obtain a supporting line $e'$ of $\,\hal U_0'(\xi)$. Similarly, turn $f$ clockwise sufficiently small to obtain a supporting line $f'$ of $\,\hal U_0'(\xi) $. 
The meaning of ``sufficiently small'' here is that 
\begin{enumerate}[\upshape\kern 4pt ---]
\item
the intersection point $G\in e'\cap f'$ belongs to  
\[\inter{\rhullo2(\set{A_0}\cup \hal U_0'(\xi) )\setminus \hal U_0'(\xi)},
\] 
which is possible since  $\dist {A_0}{\hal U_0'(\xi) }>0$, and, in addition,
\item the intersection points $A_1'\in f'\cap [A_1, A_2]$ and $A_2'\in e'\cap [A_1, A_2]$ exist and they are strictly between $A_1$ and $A_1^\ast$ and $A_2^\ast$ and $A_2$, respectively.
\end{enumerate}
By Lemma~\ref{lemmaperspCs}, we have that $\comet {A_0}{\hal U_0(\xi)}\loosin \comet G {\hal U_0'(\xi) }$. 
This fact together with  $\,\hal U_0(\xi)\subseteq \comet {A_0}{\hal U_0(\xi)}$ yield the loose inclusion $ \hal U_0(\xi) \loosin \comet G {\hal U_0'(\xi) }$.
Since we also have that  $\,\hal U_0(\xi)\loosin {\hsz}$ and the inclusion
$\inter{\hal V_1}\cap\inter{\hal V_2}\subseteq \inter {\hal V_1\cap \hal V_2}$ trivially holds for all $\hal V_1,\hal V_2\subseteq \preal$, we obtain that
\begin{align*}
\hal U_0(\xi) &\loosin\comet G {\hal U_0'(\xi) } \cap {\hsz}
\cr 
&=\rhullo2({\hal U_0'(\xi) }\cup\set{A_1', A_2'})
\subseteq \rhullo2(\hal U_1(\xi)\cup\set{A_1, A_2}).
\end{align*}
This loose inclusion and \eqref{eqpbxTlSnHfgjQwpLnt} yield a (small) positive $\delta\in\real$ such that 
$\xi+\delta<1$ and 
\begin{align*}
\hal U_0(\xi+\delta) &=
\hthet{P_0}{(\xi+\delta)/\xi}(\hal U_0) \loosin \rhullo2(\hal U_1(\xi)\cup\set{A_1, A_2})\cr
&\subseteq \rhullo2(\hal U_1(\xi+\delta)\cup\set{A_1, A_2})
\end{align*}
Hence, $\xi+\delta\in H$, contradicting $\xi=\max H$; see \eqref{eqtxtxiisMax}. This contradiction excludes  the second case where $E_1$ belongs to the ``left leg'' of $\trap(\xi)$.

Finally, it is a contradiction that both cases have been excluded. This completes the proof of Lemma~\ref{lemmaEFmain}
\end{proof}

\section{Getting rid of edges}\label{sectiongettingrid}
Recall that \emph{disks} are convex hulls of circles and \emph{circles} are boundaries of disks. 

\begin{lemma}\label{lemmafnntrsCtn} The intersection of finitely many disks of the plane is an edge-free compact convex set whenever it is not empty.
\end{lemma}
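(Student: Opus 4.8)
The plan is to prove that a nonempty intersection $\hal V := \bigcap_{i=1}^n \disk{C_i}{r_i}$ of finitely many disks is compact, convex, and edge-free. The first two properties are immediate: each disk is compact and convex, and both properties are preserved under arbitrary intersections (closedness and boundedness are clear, and convexity of an intersection of convex sets is standard). So the whole content of the lemma lies in \emph{edge-freeness}, and I would spend essentially all the effort there. By the definition recalled in Section~\ref{sect3}, it suffices to show that for every supporting line $\ell$ of $\hal V$, the set $\ell \cap \hal V$ is a singleton; equivalently, by Lemma~\ref{lemmaNoThree}, that no line meets $\partial \hal V$ in three or more points. I would work with the latter formulation, since it localizes the problem to a single line.

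First I would set up the indirect argument: suppose some line $\ell$ contains three distinct points $P_1, P_2, P_3 \in \partial \hal V$, listed in this order along $\ell$, so that $P_2$ lies strictly between $P_1$ and $P_3$. The key observation is that a boundary point of the intersection must lie on the boundary of at least one of the defining disks. More precisely, for each $P_m$ ($m \in \set{1,2,3}$) there is some index $i(m)$ with $P_m \in \partial\, \disk{C_{i(m)}}{r_{i(m)}}$, i.e. $P_m$ lies on the circle $\circel{C_{i(m)}}{r_{i(m)}}$; otherwise $P_m$ would be an interior point of every disk and hence an interior point of $\hal V$, contradicting $P_m \in \partial \hal V$. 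This reduces the three-point configuration on a single line to the geometry of circles, where I can exploit the fact that a line meets a circle in at most two points.

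Now I would analyze the three points on $\ell$. Since a line and a circle meet in at most two points, the three points $P_1, P_2, P_3$ cannot all lie on one common circle $\circel{C_i}{r_i}$; hence at least two distinct circles are involved among the $i(m)$. The crucial step, and the one I expect to be the main obstacle, is to derive a contradiction from the fact that $P_2$ lies strictly between $P_1$ and $P_3$ on $\ell$ while $\hal V$ is contained in every disk. The argument I have in mind: consider a disk $\disk{C}{r}$ whose boundary circle passes through $P_2$. Since $P_1, P_3 \in \hal V \subseteq \disk{C}{r}$, the whole segment $[P_1, P_3]$ lies in this disk by convexity, and $P_2$ is an interior point of that segment. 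But $P_2$ lies on the circle $\circel{C}{r}$, so the chord $[P_1,P_3]$ of the disk touches the boundary at an interior point $P_2$; this forces $[P_1, P_3]$ to be tangent to $\circel{C}{r}$ at $P_2$, whence $\ell$ is the tangent line to $\circel{C}{r}$ at $P_2$ and meets $\circel{C}{r}$ only at $P_2$. Consequently $P_1$ and $P_3$ are \emph{strictly interior} to $\disk{C}{r}$. Repeating this with the disk(s) carrying $P_1$ and $P_3$ on their boundary circles, the same tangency argument applied to those disks would force $P_2$ to be strictly interior to them; combining the conclusions shows that one of the three points is simultaneously forced to be an interior point of \emph{every} defining disk, hence an interior point of $\hal V$, contradicting that it was chosen on $\partial\hal V$.

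The delicate bookkeeping is making sure the tangency conclusion is applied consistently so that some $P_m$ ends up interior to all $n$ disks at once; I would handle this by focusing on the middle point $P_2$ and the two outer points, showing that every disk either has $P_2$ in its interior (via the chord/tangency argument above, applied whenever $P_1$ or $P_3$ is on its boundary) or has $P_2$ on its boundary but then by the same argument $P_2$ is the unique intersection of $\ell$ with that circle while $P_1,P_3$ stay inside the disk. Careful handling shows $P_2 \in \inter{\hal V}$, the desired contradiction. This contradiction shows $\ell \cap \partial \hal V$ has at most two points for every line, so by Lemma~\ref{lemmaNoThree} the intersection $\hal V$ is edge-free, completing the proof.
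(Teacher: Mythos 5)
Your overall strategy is viable and genuinely different from the paper's: the paper picks $2n+1$ distinct points on a supporting line (possible because a non-edge-free set contains a whole segment of positive length in its boundary) and uses the pigeonhole principle to force three of them onto a single circle, whereas you work with only three collinear boundary points and the geometry of a single disk. Your reduction via Lemma~\ref{lemmaNoThree} and your observation that each $P_m\in\partial\hal V$ must lie on the boundary circle of at least one defining disk are both correct.

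The gap is in the tangency step. From ``$[P_1,P_3]\subseteq\hal D$ and the point $P_2$, interior to the segment, lies on $\partial\hal D$'' you correctly infer that $\ell$ must be tangent to the circle $\partial\hal D$ at $P_2$; but you then conclude that $P_1$ and $P_3$ are \emph{strictly interior} to $\hal D$. This is backwards: a tangent line meets the closed disk only at the point of tangency, so $P_1$ and $P_3$ would be strictly \emph{exterior} to $\hal D$, which contradicts $P_1,P_3\in\hal V\subseteq\hal D$ and already finishes the proof. Because you miss this, your endgame does not close. You aim to show $P_2\in\inter{\hal V}$ while still allowing the case that $P_2\in\partial\hal D_i$ for some defining disk; but $\inter{\hal V}\subseteq\inter{\hal D_i}$ by monotonicity of the interior, so that case must be \emph{excluded}, not accommodated. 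Moreover, the ``same tangency argument'' cannot be applied at $P_1$ or $P_3$: those are the outer points of the triple, not points lying strictly between two points of the relevant disk, so no tangency is forced there. The repair is short: a point of a circle can never lie strictly between two points of the corresponding closed disk (strict convexity of the disk), so no defining circle passes through $P_2$; hence $P_2\in\bigcap_i\inter{\hal D_i}$, an open subset of $\hal V$, so $P_2\in\inter{\hal V}$, contradicting $P_2\in\partial\hal V$. With that one sentence your argument becomes a correct, and arguably slicker, alternative to the paper's pigeonhole proof.
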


\begin{proof} Let $\hal D_1,\dots,\hal D_n$ be disks such that $\,\hal U:= \hal D_1\cap\dots\cap\hal D_n\neq\emptyset$. Clearly, $\,\hal U$ is compact and convex. For the sake of contradiction, suppose that $\,\hal U$ is not edge-free. Then we can pick a supporting line $\ell$ and   $2n+1$ distinct points $P_1,\dots,P_{2n+1}$ belonging to $\ell\cap\partial\hal U$. Since $\inter{\hal D_1}\cap\dots\cap \inter{\hal D_1}\subseteq\inter {\hal U}$, none of the points $P_i$   belongs to this intersection. Hence, for each $i\in\set{1,\dots,2n+1}$, there is a $j=j(i)$ in $\set{1,\dots,n}$ such that $P_i\notin \inter{\hal D_{j(i)}}$. But $P_i\in \hal U\subseteq \hal D_{j(i)}$, whence $P_i\in \ell\cap  \partial\hal D_{j(i)}$. By the pigeonhole principle, there are pairwise distinct $i_1,i_2,i_3\in \set{1,\dots 2n+1}$ such that $j(i_1)=j(i_2)=j(i_3)$. Letting $j$ be this common value, $\set{P_{i_1},P_{i_2},P_{i_3}}\subseteq \ell\cap  \partial\hal D_{j}$. This is a contradiction, because a line and a circle can have at most two points in common.
\end{proof}

For a positive $d\in\real$ and a compact convex subset $\,\hal U$ of $\preal$, we define the \emph{open extension} $\enlarge {\hal U} d$ of $\,\hal U$ by $d$ as 
\begin{align}
\enlarge {\hal U} d := {}&\set{X\in \preal: (\exists Y\in \hal U)\,(\dist X Y  <  d)}\cr
= {}&\set{X\in \preal:  \dist {\set X }{\hal U}  <  d)} ;
\label{eqenlarge}
\end{align}
the second equality above is a consequence of the compactness of $\,\hal U$. Clearly, $\enlarge {\hal U} d$ is an open  set.  For convex compact sets $\,\hal U,\hal V\subseteq \preal$ such that $\,\hal U\subseteq \hal V$, we define the \emph{abundance} of $\hal V$ over $\,\hal U$ as
\begin{equation}
\abund {\hal U}{\hal V} :=\inf\set{d\in \real: 0 < d\text{ and }\hal V\subseteq \enlarge {\hal U} d}. 
\label{eqabund}
\end{equation}
In order to reduce the general case of Theorem~\ref{thmmain} to the edge-free case covered by Lemma~\ref{lemmaEFmain}, we are going to prove the following lemma.

\begin{lemma}\label{lemmagetridedge}
For each nonempty convex compact subset $\,\hal U$ of the plane $\preal$, there exists a sequence $(\hal U_n\colon n\in\nplu)$ of \emph{edge-free} convex compact subsets of $\preal$ such that 
\begin{enumerate}[\quad\upshape (A)]
\item\label{lemmagetridedgea} $\,\hal U\subseteq \hal U_{n+1}\subseteq \hal U_n$ for all $n\in\nplu$, 
\item\label{lemmagetridedgeb} $\lim_{n\to\infty} \abund {\hal U} {\hal U_n} = 0$, and
\item\label{lemmagetridedgec} $\,\hal U = \bigcap_{n\in\nplu} \hal U_n$.
\end{enumerate}
\end{lemma}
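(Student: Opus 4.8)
The plan is to squeeze $\hal U$ from outside between finite intersections of disks, using Lemma~\ref{lemmafnntrsCtn} to guarantee edge-freeness. Everything reduces to the following one-shot statement: \emph{for every $\delta>0$ there is a finite intersection $\hal V$ of disks with $\hal U\subseteq \hal V\subseteq \enlarge{\hal U}{\delta}$.} Granting this, I would choose for each $n\in\nplu$ such a set $\hal V_n$ with $\delta=1/n$, and then put $\hal U_n:=\hal V_1\cap\dots\cap\hal V_n$. Each $\hal U_n$ is again a finite intersection of disks, hence an edge-free compact convex set by Lemma~\ref{lemmafnntrsCtn}; by construction the sequence is decreasing and every member contains $\hal U$, which is \ref{lemmagetridedgea}, and $\abund{\hal U}{\hal U_n}\le\abund{\hal U}{\hal V_n}\le 1/n\to 0$, which is \ref{lemmagetridedgeb}. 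Part \ref{lemmagetridedgec} is then a formality: $\hal U\subseteq\bigcap_{n}\hal U_n$ is clear, and if $X\notin\hal U$ then $\dist{\set X}{\hal U}>0$, so for $n$ large enough $X\notin\enlarge{\hal U}{1/n}\supseteq\hal U_n$; hence $\bigcap_n\hal U_n=\hal U$.

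To produce $\hal V$, fix a point $O\in\hal U$ and a radius $\rho_0$ with $\hal U\subseteq\ball{O}{\rho_0}$. For a direction $u\in\Cunit$, let $H_u$ be the supporting half-plane of $\hal U$ with outer normal $u$ (so that $\hal U$ is on its left), and let $H_u^+$ be its outward translate by $\delta/2$, that is, the half-plane bounded by the line parallel to $\partial H_u$ lying at distance $\delta/2$ on the far side of $\hal U$, still with $\hal U\loosin H_u^+$. Because the line $\partial H_u^+$ stays at distance at least $\delta/2>0$ from the bounded set $\hal U$, a single radius $R$ (depending only on $\rho_0$ and $\delta$) is large enough that for every $u$ one can place a disk $D_u$ of radius $R$ tangent to $\partial H_u^+$ from the near side with $\hal U\subseteq D_u\subseteq H_u^+$; this is a one-line computation, since for $Y\in\hal U$ the membership $Y\in D_u$ amounts to $R\ge (t^2+s^2)/(2t)$, where the depth $t\ge\delta/2$ of $Y$ beneath $\partial H_u^+$ and the lateral displacement $s\le\rho_0$ are both bounded. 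Now take finitely many directions $u_1,\dots,u_m\in\Cunit$ forming a $\gamma$-net of the whole unit circle, and set $\hal V:=D_{u_1}\cap\dots\cap D_{u_m}$. Then $\hal U\subseteq\hal V$, and $\hal V$ is an edge-free compact convex set by Lemma~\ref{lemmafnntrsCtn}.

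It remains to arrange $\hal V\subseteq \enlarge{\hal U}{\delta}$ by taking the net fine enough, and this quantitative step is where I expect the real work to lie. Since $D_{u_i}\subseteq H_{u_i}^+$, we have $\hal V\subseteq \hal P:=H_{u_1}^+\cap\dots\cap H_{u_m}^+$, the convex polygon cut out by the outward-shifted supporting lines in the net directions. For the intersection over \emph{all} directions one has the clean identity $\bigcap_{u\in\Cunit}H_u^+=\overline{\enlarge{\hal U}{\delta/2}}\subseteq\enlarge{\hal U}{\delta}$, valid because for a convex set the largest signed height of a point $X$ beyond its supporting lines equals $\dist{\set X}{\hal U}$. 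A routine compactness argument on $\Cunit$ — first taking $\gamma$ small enough to confine $\hal P$ to a fixed ball, then covering the compact ``collar'' of points at distance $\ge\delta$ from $\hal U$ inside that ball by finitely many separating supporting lines via \eqref{eqpbxSpRtlNjbbr} — upgrades this identity to finitely many net directions, yielding $\hal P\subseteq\enlarge{\hal U}{\delta}$ and hence $\hal V\subseteq\enlarge{\hal U}{\delta}$. Finally, I would note that no case distinction is needed: the construction refers only to supporting half-planes and to the positive gap $\delta/2$, never to the interior of $\hal U$, so it goes through verbatim when $\hal U$ is a segment or a singleton (allowing $\rho_0=0$).
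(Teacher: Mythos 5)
Your argument is correct and is essentially the paper's own: both write each $\hal U_n$ as a finite intersection of disks (so that Lemma~\ref{lemmafnntrsCtn} supplies edge-freeness), obtain each disk by inflating a separating supporting line of $\hal U$ into a very large disk that still contains $\hal U$, and use a covering-compactness argument on the set of points to be excluded to get by with finitely many such disks. The only real difference is bookkeeping: the paper enumerates all \emph{rational} disks containing $\hal U$ once and takes partial intersections of that universal sequence, whereas you construct an explicit uniform-radius family for each tolerance $1/n$ and intersect cumulatively; both devices yield \eqref{lemmagetridedgea} and \eqref{lemmagetridedgeb} in the same way.
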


\begin{proof}
A disk $\set{\pair x y: (x-a)^2+(y-b)^2\leq r^2}$ will be called  \emph{rational} if $a$, $b$, and $r$ are rational numbers. By basic cardinal arithmetics, there are only countably many rational disks. Hence, there exists a sequence $(\hal D_n: n\in\nplu)$ consisting of all rational disks that include $\,\hal U$ as a subset. For $n\in\nplu$, let $\,\hal U_n:=\hal D_1\cap\dots\cap \hal D_n$; it is an edge-free compact convex set by Lemma~\ref{lemmafnntrsCtn}, and part \eqref{lemmagetridedgea} of the lemma clearly holds. We can assume that $\,\hal U\subseteq\inter{\hal D_1}$, because otherwise we can interchange $\hal D_1$ with a much larger disk of the sequence. In Figure~\ref{fignbhD}, to save space, only two arcs of $\partial\hal D_1$ are given.

\begin{figure}[ht] 
\centerline
{\includegraphics[scale=1.1]{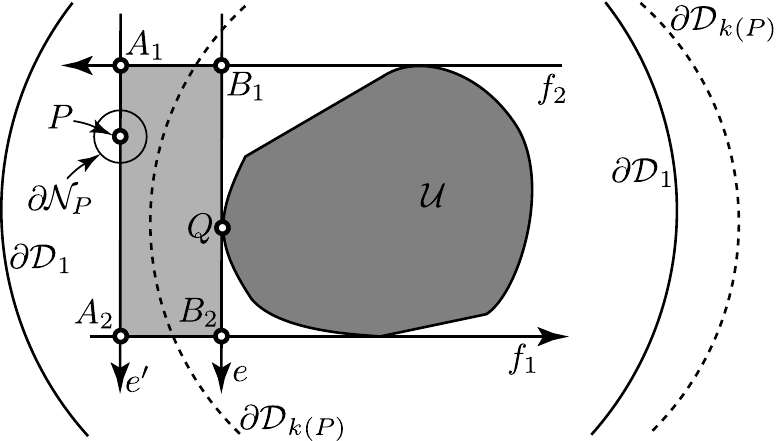}}
\caption{Illustration for the proof of Lemma~\ref{lemmagetridedge}
\label{fignbhD}}
\end{figure}%

For an arbitrary (small) $0<\epsilon\in\real$, let $\hal F(\epsilon):=\hal D_1\setminus \enlarge{\hal U}\epsilon$. Clearly, $\hal F(\epsilon)$ is a compact set. Let $P\in \hal F(\epsilon)$. Since $P\notin \hal U$,  \eqref{eqpbxSpRtlNjbbr} allows us to pick a directed supporting line $e$ of $\,\hal U$ such that $P$ is strictly on the right of $e$; see Figure~\ref{fignbhD}. Let $e'$ be the directed line through $P$ such that $\dir {e'}=\dir e$.
By \eqref{eqpbxZhgTnQspL}, there are exactly two supporting lines,  $f_1$ and $f_2$, that are perpendicular to $e$; their role together with the light-grey rectangle in Figure~\ref{fignbhD} is to show how to choose a rational disk (with sufficiently large radius) containing $\,\hal U$ such that 
each point of this disk is strictly on the left of $e'$. Since this disk belongs to our sequence, it is $\hal D_{k(P)}$ for some $k(P)\in \nplu$; two arcs of  $\partial\hal D_{k(P)}$ are given by dashed curves in the figure. Note that $P\in e'$ need not be between $A_1$ and $A_2$. 
It is clear by the choice of $\hal D_{k(P)}$ that we can pick a circular neighborhood  $\ngbhood P$ of $P$ such that 
$\ngbhood P\cap \hal D_{k(P)}=\emptyset$ and so $\ngbhood P\cap \hal U_{k(P)}=\emptyset$. Note that $\ngbhood P$ is an open set; its boundary is indicated in Figure~\ref{fignbhD}. Since the collection $\set{\ngbhood P: P\in 
\hal F(\epsilon)}$ covers the compact set $\hal F(\epsilon)$, we can select finitely many points $P_1,\dots, P_t$ of $\hal F(\epsilon)$ such that $\hal F(\epsilon)\subseteq \ngbhood {P_1}\cup\dots\cup  \ngbhood {P_t}$. Let $m(\epsilon):=\max\set{k(P_1),\dots,k(P_t)}$. 

Now assume that  $n\geq m(\epsilon)$ and $P\in \hal F(\epsilon)$.
Then $P\in \ngbhood{P_i}$ for some $i\in\set{1,\dots,t}$, whence
$\ngbhood{P_i}\cap \hal U_{k(P_i)}=\emptyset$ yields that $P\notin\hal U_{k(P_i)}$. Thus, for all $n\geq m(\epsilon)$,   
$P\notin\hal U_{n}$ since the  sequence $(\hal U_n: n\in\nplu)$ is decreasing.  This fact together with $\,\hal U_n\subseteq \hal D_1$ imply that, for all $n\geq m(\epsilon)$,  we have that 
$\hal U_n\subseteq \enlarge {\hal U}\epsilon$. This means that 
$ \abund {\hal U} {\hal U_n} \leq \epsilon$ for all $n\geq m(\epsilon)$, proving part \eqref{lemmagetridedgeb} of Lemma~\ref{lemmagetridedge}.

Finally,  part \eqref{lemmagetridedgec} follows from  part \eqref{lemmagetridedgeb}, since every point outside $\,\hal U$ is at a positive distance from $\,\hal U$. This completes the proof of 
of Lemma~\ref{lemmagetridedge}.
\end{proof}

Now, armed with the preparatory lemmas that we have proved so far, we are in the position to prove our theorem.

\begin{proof}[Proof of Theorem~\textup{\ref{thmmain}}]
By \eqref{eqtxtmndgknvx} and Lemma~\ref{lemmasinglton}, we can assume that $\,\hal U_0$ and $\,\hal U_1$ are \emph{convex} compact sets and none of them is a singleton. Assume also that they satisfy condition \ref{thmmain}\eqref{thmmaina} or condition \ref{thmmain}\eqref{thmmainb}, and let $\phi$ denote the transformation from \ref{thmmain}\eqref{thmmaina} or \ref{thmmain}\eqref{thmmainb}, respectively. Then $\,\hal U_1=\phi(\hal U_0)$.  Finally, assume the premise of \eqref{eqpbxThmlnygrSmzWrB}. 
Let $(\hal U_{0,n}: n\in\nplu)$ be a sequence provided for $\,\hal U_0$ by Lemma~\ref{lemmagetridedge} with the notational change that we write $\,\hal U_0$ and $\,\hal U_{0,n}$ instead of $\,\hal U$ and $\,\hal U_n$, respectively. Since $\phi$  preserves the validity of Lemma~\ref{lemmagetridedge}, the statement of this lemma holds also for $\,\hal U_1$ and $\,\hal U_{1,n}:=\phi(\hal U_{0,n})$ instead of $\,\hal U$ and $\,\hal U_n$, respectively. 

Let $W=(A_0+A_1+A_2)/3$ be the barycenter of $\hsz$.
For each $n\in\nplu$ and $i\in\set{0,1,2}$, let $A_i^{(n)}:=\hthet W{(n+1)/n}(A_i)$, and let 
\begin{align*}
\khsz n:=  \hthet W{(n+1)/n}({\hsz}) = \rhullo2{\set{A_0^{(n)},A_1^{(n)},A_2^{(n)}}}.
\end{align*}
Observe that part \eqref{lemmagetridedgeb} of Lemma~\ref{lemmagetridedge} allows us to pick an integer $t(n)\in\nplu$ 
for each $n\in\nplu$ such that $t(n)\geq n$,  $\,\hal U_{0,t(n)}\subseteq \khsz n$, and $\,\hal U_{1,t(n)}\subseteq \khsz n$.  Also, we know from Lemma~\ref{lemmagetridedge} that $\,\hal U_{0,t(n)}$ is edge-free. Thus,  Lemma~\ref{lemmaEFmain} is applicable and it yields a $j(n)\in\set{0,1,2}$ and a $k(n)\in \set{0,1}$ such that the inclusion in the third line of \eqref{eqpbxThmlnygrSmzWrB} holds with self-explanatory notational changes to be exemplified by \eqref{eqbztGrhBmQxC} soon. 
Although $\pair{j(n)}{k(n)}\in \set{0,1,2}\times \set{0,1}$ may depend on $n$, one of the six possible values occurs for infinitely many $n$. Without loss of generality, to avoid complicated notations, we can assume that  $\pair{j(n)}{k(n)}$ does not depend on $n$, because we could work with a subsequence $(n_1,n_2,n_3,\dots)$ instead of $(1,2,3,\dots)$ otherwise.
Also, by changing the notation if necessary, we can assume that $j(n)=0$ and $k(n)=0$. That is, for all $n\in\nplu$, 
\begin{equation}
\hal U_{1,n} \subseteq \rhullo2(\hal U_{0,n}\cup\set{A_1^{(n)},A_2^{(n)}} ).
\label{eqbztGrhBmQxC}
\end{equation}
As mentioned before, Lemma~\ref{lemmagetridedge} and, in particular, its part \eqref{lemmagetridedgec} hold for $\,\hal U_1$ and $(\hal U_{1,n}:n\in\nplu)$. Combining this fact with \eqref{eqbztGrhBmQxC}, 
we obtain that
\begin{equation*}
\hal U_1\subseteq \bigcap_{n\in\nplu} \rhullo2(\hal U_{0,n}\cup\set{A_1^{(n)},A_2^{(n)}} ).
\end{equation*}

Hence, it suffices to show that 
\begin{equation}
\bigcap_{n\in\nplu} \rhullo2(\hal U_{0,n}\cup\set{A_1^{(n)},A_2^{(n)}} )
\subseteq \rhullo2(\hal U_{0}\cup\set{A_1,A_2} ).
\label{eqzhGrnTmrgk} 
\end{equation}
So assume that $P\in\preal$ belongs to the intersection in \eqref{eqzhGrnTmrgk}. So $P$ belongs to each of the sets we intersect in \eqref{eqzhGrnTmrgk}. Hence,
applying Carath\'eodory's well-known theorem, see, for example, 
Schneider~\cite[Theorem 1.1.4]{Schneider1993},  and using the convexity of $\,\hal U_0$,  we can pick a point $X_n\in \hal U_{0,n}$ such that $P$ is of the form
\begin{equation}
P=\lambda_{0,n}\cdot X_n + \lambda_{1,n} \cdot A_1^{(n)} + \lambda_{2,n}\cdot A_2^{(n)},
\label{eqlmBdkadhzksllVt} 
\end{equation}
where $\vec\lambda_n:=\tuple{\lambda_{0,n},\lambda_{1,n},\lambda_{2,n}}\in [0,1]^3$
such that $\lambda_{0,n}+\lambda_{1,n}+\lambda_{2,n}=1$. This condition on $\vec\lambda_n$ means that $\vec\lambda_n$ belongs to the equilateral triangle 
\[E:=\rhullo 3(\set{\tuple{1,0,0}, \tuple{0,1,0}, \tuple{0,0,1}})
\]
in the 3-dimensional space $\real^3$. Hence, as $(\hal U_{0,n}:n\in\nplu)$ is a decreasing sequence  by Lemma~\ref{lemmagetridedge}\eqref{lemmagetridedgea},  the pair $\pair{X_n}{\vec\lambda_n}$ ranges in the Cartesian product
$ U_{0,0} \times E$, which is a compact subset of $\real^2\times E$ since  $E$ is compact and so is  $\,\hal U_{0,0}$ by Lemma~\ref{lemmagetridedge}. 
Therefore, the sequence  $(\pair{X_n}{\vec\lambda_n}: n\in\nplu)$ has a cluster point $\pair X{\vec\lambda}:=\tuple{X,\tuple{\lambda_0,\lambda_1,\lambda_2}}\in \real^2\times E$.
So this sequence has a subsequence converging to   $\pair X{\vec\lambda}$. To simplify the notation again,  
we can assume that this subsequence is the whole sequence; 
without this assumption, the argument is similar but needs more complicated notations. Forming the limit of \eqref{eqlmBdkadhzksllVt} and using that $\lim_{n\to\infty}A_i^{(n)}= A_i$ for $i\in\set{1,2}$, we obtain that 
\begin{equation}
P=\lambda_{0}\cdot X + \lambda_{1} \cdot A_1 + \lambda_{2}\cdot A_2.
\label{eqlmBdkadhmgVhtszhTt} 
\end{equation}
Since $E$ is a compact set, it contains $\vec\lambda=\tuple{\lambda_0,\lambda_1,\lambda_2}$, that is, we have a convex linear combination in \eqref{eqlmBdkadhmgVhtszhTt}. It follows easily from 
Lemma~\ref{lemmagetridedge}\eqref{lemmagetridedgeb} that $X=\lim_{n\to\infty}X_n\in \hal U_0$. These two facts and  \eqref{eqlmBdkadhmgVhtszhTt} imply that $P\in \rhullo 2(\hal U_0\cup\set{A_1,A_2})$. This proves \eqref{eqzhGrnTmrgk} since $P$ was an arbitrary point in the intersection on the left of  \eqref{eqzhGrnTmrgk}. 
The proof of Theorem~\ref{thmmain} is complete.
\end{proof}

\section*{Acknowledgement}
This research of the first author and that of the second author was supported by the Hungarian Research Grants KH 126581 and K~116451, respectively.

\end{document}